\newtheorem{theo}{Theorem}[section]
\newtheorem{lem}[theo]{Lemma}
\newtheorem{prop}[theo]{Proposition}
\newtheorem{defn}[theo]{Definition}
\theoremstyle{definition}
\newtheorem{rem}[theo]{Remark}
\newtheorem*{notaetoile}{Notation}
\def\a{{\a}}
\def\a{{\mathfrak a}}
\def\C{\mathbb C}
\def\E{\mathbb E}
\def\N{\mathbb N}
\def\P{\mathbb{P}}
\def\Q{\mathbb Q}
\def\R{\mathbb R}
\def\Re{\operatorname{Re}}
\def\Z{\mathbb{Z}}
\def\ch{\operatorname{ch}}
\begin{document}  

\title[Affine Lie algebras and a conditioned space-time Brownian motion]{Affine Lie algebras  and conditioned space-time Brownian motions in   affine Weyl chambers}

\author{Manon Defosseux}
\address{Laboratoire de Math\'ematiques Appliqu\'ees \`a Paris 5, Universit\'e Paris 5, 45 rue des  Saints P\`eres, 75270 Paris Cedex 06.}
\email{manon.defosseux@parisdescartes.fr}

\begin{abstract} We construct a sequence of Markov processes on the set of dominant weights of an affine Lie algebra $\mathfrak{g}$ considering  tensor product of irreducible highest weight modules of $\mathfrak{g}$ and specializations of the characters  involving the Weyl vector $\rho$. We show that it converges towards a space-time Brownian motion with a  drift, conditioned to remain in a Weyl chamber associated to the root system of $\mathfrak{g}$. This extends in particular the results of \cite{Defosseux} to any affine Lie algebras, in the case with a drift.
\end{abstract}
 
\maketitle 
\section{introduction}
 In \cite{Defosseux} we have studied a conditioned space-time Brownian motion which appears naturally in the framework of   representation theory of the
affine Lie algebra $\hat{\mathfrak{sl}_2}$:  a space-time Brownian motion $(t,B_t)_{t\ge 0}$   conditioned (in Doob's sense)  to remain in a moving boundary  domain $$D=\{(r,z)\in \R_+\times \R_+: 0< z< r\},$$
which can be seen as the Weyl chamber associated to the root system of the affine Lie algebra $\hat{\mathfrak{sl}_2}$.
The present paper deals with the case of any affine Lie algebras. Let us briefly describe  the framework of the paper.  First we need an affine Lie algebra $\mathfrak{g}$. As in the finite dimensional case, for  a dominant integral weight $\lambda$ of  $\mathfrak{g}$ one defines the character of an irreducible highest-weight representation $V(\lambda)$ of $\mathfrak{g}$ with highest weight $\lambda$, as a formal series defined for $h$ in a  Cartan subalgebra  $\mathfrak{h}$ of $\mathfrak{g}$ by  
\begin{align*} 
\mbox{ch}_\lambda(h)=\sum_{\mu}\mbox{dim} (V(\lambda)_\mu)e^{\langle \mu ,h\rangle}, 
\end{align*}
where $V(\lambda)_\mu$ is the weight space of $V(\lambda)$ corresponding to the weight $\mu$. This formal series converges for every $h$ in a subset of the Cartan subalgebra which doesn't depend on $\lambda$.  A particular choice of an element $h\in \mathfrak{h}$ in the region of convergence of the characters is called a specialization. Let us fix a dominant weight $\omega$ once for all. For  a dominant weight  $\lambda$, the following decomposition
\begin{align*}
\mbox{ch}_{\omega}\mbox{ch}_{\lambda}=\sum_{\beta\in P_+} M_{\lambda}(\beta)\mbox{ch}_{\beta},
\end{align*}
where $M_{\lambda}(\beta)$ is the multiplicity of the module with highest weight $\beta$ in the decomposition of $V(\omega)\otimes V(\lambda)$, allows to define a transition probability  $Q_\omega$ on the set of dominant weights, letting 
for $\beta$ and $\lambda$ two dominant weights of $\mathfrak{g}$, 
\begin{align}\label{qaffine}
Q_\omega(\lambda,\beta)=\frac{\mbox{ch}_\beta(h)}{\mbox{ch}_\lambda(h) \mbox{ch}_\omega(h)}M_{\lambda}(\beta),
\end{align}
 where $h$ is chosen in the region of convergence of the characters. Such a Markov chain has been recently considered by C. Lecouvey, E. Lesigne, and M. Peign\'e in \cite{LLP}.

It is a natural question to ask if there exists a  sequence $(h_n)_n$ of elements of $\mathfrak{h}$ such that the corresponding sequence of Markov chains  converges towards a continuous process and what the limit is.  One could show that there are basically three cases depending on the scaling factor. Roughly speaking the three cases are the following. When the scaling factor is $n^{-\alpha}$,  with $\alpha\in(0,1)$ (resp. $\alpha>1$), the limiting process has to do  with a Brownian motion conditioned -- in Doob's sense -- to remain in a Weyl chamber (resp.   an alcove) associated to the root system of an underlying   finite dimensional Lie algebra.  When $\alpha=1$, the limiting process has to do with a space time Brownian motion conditioned to remain in a Weyl chamber associated to the root system of the affine Lie algebra. Figure 1 below illustrates   three distinct asymptotic behaviors    in the case when the affine Lie algebra is $\hat{\mathfrak{sl}_2}$. The Weyl chamber $\mathcal C$  is the area delimited by  gray and light gray half-planes.  Essentially, when the scaling factor is  $n^{-\alpha}$ with $\alpha\in(0,1)$,  one could show that the $\Lambda_0$-component of the limiting process is $+\infty$ and that its projection  on $\R\alpha_1$ is  a Brownian motion conditioned  to remain positive.  When the scaling factor is  $n^{-\alpha}$ with $\alpha>1$, one could show that the projection of the limiting process on $\R_+\Lambda_0+\R\alpha_1$ lives in an interval  (dashed interval within Figure 1) and that its projection on $\R\alpha_1$ is a Brownian motion conditioned to remain  in an interval. When the scaling factor is $n^{-1}$, the projection of the limiting process on $\R_+\Lambda_0+\R\alpha_1$ is a space-time Brownian motion conditioned to remain in  $\mathcal C$, the  time axis being $\R_+\Lambda_0$  and the space axis being   $\R\alpha_1$.    This is this  last case which is considered in the  paper, for any affine Lie algebras. The convergence for the other values of $\alpha$ could be obtained with similar arguments as the ones developed in this paper. Nevertheless the case when $\alpha=1$ seems the most interesting case in our context as the limiting process in this case, is the only one that is really  specific to the affine framework. Thus we prefer to focus on this case. In this way we lose in generality but hope to win in clarity.      

\begin{center}
\includegraphics[scale=0.7]{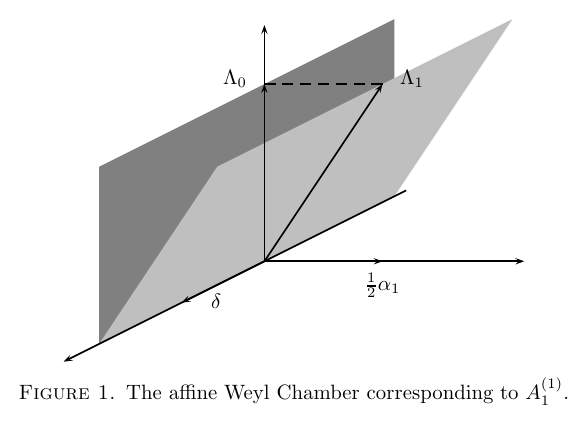}  \label{AWC}
\end{center}
The paper is organized as follows. In section \ref{A moving boundary problem} we describe   the conditioned process occuring in our setting when representations of   affine Lie algebra $\hat{\mathfrak{sl}_2}$ are considered. This is  a space-time Brownian motion with a positive drift conditioned (in Doob's sense) to remain forever in the time-dependent domain $D$. We  show by purely probabilistic arguments that  the theta functions play a crucial role in the construction of the process, which is unlighted by the algebraic point of view developed in the following sections. The vocation of this section is to give an idea of   the probabilistic aspects of mathematical objects occuring in the paper. In section \ref{Affine Lie algebras and their representations} we briefly recall the necessary background on representation theory of affine Lie algebras. We introduce  in section \ref{Markovchains} random walks on the set of integral weights of an affine Lie algebra $\mathfrak{g}$, and Markov chains on the set of its dominant integral weights, considering tensor products of irreducible highest weight representations of $\mathfrak{g}$. We show that the Weyl character formula implies that they satisfy a reflection principle. In section \ref{Scaling limit of  Random walks} we consider  a sequence of random walks obtained for particular specializations involving the Weyl vector $\rho$ of the affine Lie algebra, and prove that its scaling limit is a space-time standard Brownian motion with drift  $\rho$, living on the Cartan subalgebra of $\mathfrak{g}$. We introduce  in section \ref{Brownian} a space-time Brownian motion with drift $\rho$, conditioned to remain in an affine Weyl chamber and prove that it satisfies a reflection principle. We prove in section \ref{Scaling limit of the Markov chain} that this conditioned space-time Brownian motion is the scaling limit of a sequence of Markov processes constructed in section \ref{Markovchains}  for particular specializations involving $\rho$. 
 
\section{A moving boundary problem}\label{A moving boundary problem}
Let $\gamma\in\R$, and $(X_t,t\ge 0)=((\tau_t,B^\gamma_t),t\ge 0)$ be a space-time Brownian motion. For $(u,x)\in \R\times \R$,   $\P_{(u,x)}$ denotes a probability  under  which  $(B_t^{\gamma})_{t\ge 0}$ is a standard Brownian motion with drift $\gamma$, starting from $x$, and $\tau_t=u+t$, for all $t\ge 0$. Consider the subset $D$ of $\R^2$ defined by
$$D=\{(r,z)\in \R_+\times \R_+: 0< z< r\},$$
and consider an application $h$ defined on the closure $\bar{D}$ of $D$ by
\begin{align*} 
h(u,x):&=\P_{(u,x)}(\forall t\ge 0, 0<B_t^\gamma<\tau_t),
\end{align*}
$(u,x)\in \bar D$. When $\gamma\in(0,1),$ a classical martingale argument shows that the function $h$ is the unique bounded harmonic positive  function   for the space-time Brownian motion killed on the boundary $\partial D$,  i.e.
$$\forall (t,x)\in D,\quad (\frac{1}{2}\partial_{xx}+\gamma\partial_x+\partial_t)h(t,x)=0,$$ which satisfies  the following boundary conditions 
$$ 
   \forall t\ge 0,\quad h(t,0)=h(t,t)=0, 
$$
and the condition at infinity
$$
\underset{\tiny {\begin{array}{ll}
(t,x)\to +\infty:\\
 \frac{x}{t}\to \gamma
\end{array}}}{\lim} h(t,x)=1.
$$
Such a problem is usually referred to as a moving boundary  problem (see   for instance  \cite{Crank}  for a review of various problems specifically related to time-dependent boundaries). Actually the function $h$ can be determined using a reflection principle involving the group of  tranformations $W$ generated by linear transformations $s_k$, $k\in \Z$, defined on $\R^2$ by
$$s_k(t,x)=(t,2kt-x),\quad (t,x)\in \R^2.$$ 
Let us explain how. For $k\in \Z$, define $t_k$ as the transformation   on $\R^2$ given by 
$$t_k(t,x)=(t,2kt+x),$$ $(t,x)\in \R^2$. The group 
 $W$ is actually a semi-direct product $$ \{\mbox{Id},s_0\}\ltimes \{t_k,k\in \Z\},$$ and $\bar D$ is a fundamental domain for the action of $W$ on $\R^2$. The following proposition is immediate. 
\begin{prop} \label{Wonsemigroup} For $(u,x),(u+t,y)\in \R^2$,
\begin{align*}
\P_{s_0(u,x)}(X_t=s_0(u+t,y))&=e^{-2\gamma(y-x)}\P_{(u,x)}(X_t=(u+t,y))\\
\P_{t_k(u,x)}(X_t=t_k(u+t,y))&=e^{-2k(y-x)-2k^2t+2k\gamma t}\P_{(u,x)}(X_t=(u+t,y)),
\end{align*}
for $k\in \Z$, where $\P_{(u,x)}(X_t=(u+t,y))$  stands, by a usual abuse of notation, for the probability semi-group of $(X_t)_{t\ge 0}$. 
\end{prop} 
The probability  semi-group of the space-time Brownian motion killed on the boundary of $D$ satisfies a reflection principle. Let $\{e_1,e_2\}$ be the canonical basis of $\R^2$ and $(.,.)$ be the usual inner product on $\R^2$.  Let $T$ denote the first exit  time of $D$. The reflection principle is the following.
\begin{prop}\label{reflectioncontinuesl2}
\begin{align*}\P_{(u,x)}(X_t=(u+t,y),\, T\ge t)=e^{-\gamma x}\sum\det(r)& e^{-2k^2u-2kx+(\gamma rt_k(u,x),e_1)}
\\&\times  \P_{rt_k(u,x)}(X_t=(u+t,y)),\nonumber
\end{align*}
where the sum runs over $r\in \{\mbox{Id},s_0\}, k\in \Z$. 
\end{prop} 
\begin{proof}
As $X_T\in \bar D$, one obtains using a strong Markov property and proposition \ref{Wonsemigroup} 
 that  for $(u,x),(u+t,y)\in D$,
\begin{align*} e^{-\gamma x} \sum_{r\in \{\mbox{Id},s_0\}, k\in \Z}\det(r)& e^{-2k^2u-2kx+(\gamma rt_k(u,x),e_1)}  \P_{rt_k(u,x)}(X_t=(u+t,y), T\le t)
\end{align*}  equals $0$. 
Moreover  
\begin{align*}e^{-\gamma x}\sum_{r\in \{\mbox{Id},s_0\}, k\in \Z}\det(r)& e^{-2k^2u-2kx+(\gamma rt_k(u,x),e_1)}  \P_{rt_k(u,x)}(X_t=(u+t,y), T>t)
\end{align*}
 equals
$$\P_{(u,x)}(X_t=(u+t,y),\, T\ge t).$$
Proposition follows by summing the two identities. \end{proof}
 
One obtains for the function $h$ the following expression.
\begin{prop}\label{Danslachambresl2} For $(u,x)\in D$,
\begin{align*}
h(u,x)=2\sum_{k\in \Z}\mbox{sh}(\gamma (x+2ku))e^{-2(kx+k^2u)-\gamma x}.
\end{align*}
\end{prop}
\begin{proof} 
 Summing over $y$ such that $(t+u,y)\in D$ in proposition \ref{reflectioncontinuesl2} and letting $t$ go to infinity gives the proposition.
\end{proof}
Actually $W$ can be identified with the Weyl group associated to an affine Lie algebra $\hat{\mathfrak{sl}_2}$. Writing $X_t=\tau_t\Lambda_0+B_t^\gamma \frac{\alpha_1}{2}$, $t\ge 0$, where  $\Lambda_0$ and $\alpha_1$ are  defined below, the Doob's $h$-transform  of $(X_t)_{t\ge 0}$ is a Markov process conditioned   to remain in a Weyl chamber associated to the root system of the affine Lie algebra $\hat{\mathfrak{sl}_2}$. The following sections extend this construction to any affine Lie algebras and relate   identities from propostions \ref{reflectioncontinuesl2} and \ref{Danslachambresl2}, which are particular cases of propositions   \ref{reflectioncontinue} and \ref{Danslachambre}, to     representations theory of affine Lie algebras.

\section{Affine Lie algebras and their representations}\label{Affine Lie algebras and their representations}
In order to make the reading more pleasant, we have tried to emphasize only on definitions and properties that we need for our purpose. For more details, we refer the reader to  \cite{Kac}, which is our main reference for the whole paper.
\subsection{Affine Lie algebras}
The following definitions mainly come from chapters $1$ and $6$ of \cite{Kac}.  Let $A=(a_{i,j})_{0\le i,j\le l}$  be  a generalized Cartan matrix  of  affine type. That is all the proper principal minors of $A$ are positive and $\det A=0$.  Suppose that  rows and columns of $A$ are ordered such that   $\det \mathring{A}\ne 0$, where  $\mathring{A}= (a_{i,j})_{1\le i,j\le l}$. Let $(\mathfrak{h},\Pi,\Pi^\vee)$ be a realization of $A$ with $\Pi=\{\alpha_0,\dots,\alpha_l\}\subset \mathfrak{h}^*$ the set of simple roots, $\Pi^\vee=\{\alpha_0^\vee,\dots,\alpha_l^\vee\}\subset \mathfrak{h}$, the set of simple coroots, which satisfy the following condition 
$$\alpha_j(\alpha_i^\vee)=a_{i,j}, \, i,j\in \{0,\dots,l\}.$$
Let us consider the affine Lie algebra $\mathfrak{g}$  with generators $e_i$, $f_i$, $i=0,\dots,l$, $\mathfrak{h}$ and the following defining relations :
\begin{align*} 
&[e_i,f_i]=\delta_{ij}\alpha_i^\vee, \quad [h,e_i]=\alpha_i(h)e_i,\quad [h,f_i]=-\alpha_i(h)f_i,\\
&[h,h']=0, \textrm{ for } h,h'\in \mathfrak{h}, \\
&(\mbox{ad} e_i)^{1-a_{ij}}e_{j}=0, \quad (\mbox{ad} f_i)^{1-a_{ij}}f_{j}=0,
\end{align*} 
for all $i,j=0,\dots,l$. Let $\Delta$ (resp. $\Delta_+$) denote the set of roots (resp. positive roots) of $\mathfrak{g}$, $Q$ and $Q^\vee$ the root and the coroot lattices. We denote $a_i,$ $i,\dots,l$ the labels of the Dynkin diagram of $A$ and $a^\vee_i,$ $i=0,\dots,l$ the labels of the Dynkin diagram of $^tA$. The numbers 
$$h=\sum_{i=0}^la_i\quad and \quad h^\vee=\sum_{i=0}^la_i^\vee,$$
are called, respectively, the Coxeter number and the dual Coxeter number.  The element
$$K=\sum_{i=0}^na_i^\vee\alpha_i^\vee,$$
is called the canonical central element. The element $\delta$ defined by $$\delta=\sum_{i=0}^na_i\alpha_i,$$ 
 is the smallest positive imaginary  root.   Fix an element $d\in \mathfrak{h}$ which satisfies the following condition
$$\alpha_i(d)=0, \textrm{ for } i=1,\dots,l,\quad \alpha_0(d)=1.$$ 
The elements $\alpha_0^\vee,\dots,\alpha_l^\vee,d$, form a basis of $\mathfrak{h}$. 
We denote   $\mathfrak{h}_\R$ the linear span over $\R$ of $\alpha_0^\vee,\dots,\alpha_l^\vee,d$.    We define a nondegenerate symmetric bilinear $\C$-valued form $(.\vert .)$ on $\mathfrak{h}$ as follows
$$\left\{
  \begin{array}{ll}
(\alpha_i^\vee\vert\alpha_j^\vee)=\frac{a_j}{a_j^{\vee}}a_{ij}& i,j=0,\dots,l \\
(\alpha_i^\vee\vert d)=0& i=1,\dots,l \\
(\alpha_0^\vee\vert d)=a_0 & (d\vert d)=0. 
  \end{array}
\right.$$
 We define an element $\Lambda_0\in \mathfrak{h}^*$ by 
$$\Lambda_0(\alpha_i^\vee)=\delta_{0i}, \quad i=0,\dots,l;\quad \Lambda_0(d)=0.$$ 
The linear isomorphism 
\begin{align*}
\nu:\, \,&\mathfrak{h}\to \mathfrak{h}^*, \\
& h\mapsto (h\vert.)
\end{align*} identifies $\mathfrak{h}$ and $\mathfrak{h}^*$. We still denote   $(.\vert.)$ the induced inner product   on $\mathfrak{h}^*$. We record that
\begin{align*}
&(\delta\vert \alpha_i)=0,\quad  i=0,\dots,l,\quad (\delta\vert\delta)=0,\quad (\delta\vert\Lambda_0)=1\\
&(K\vert \alpha_i)=0, \quad i=0,\dots,l,\quad (K\vert K)=0,\quad (K\vert d)=a_0.
\end{align*} 
The form $(.\vert .)$ is $W$-invariant, for $W$  the Weyl group of the affine Lie algebra $\mathfrak{g}$, i.e.  the subgroup  of $GL(\mathfrak{h}^*)$ generated by fundamental  reflections $s_\alpha$, $\alpha\in \Pi$, defined by $$s_\alpha(\beta)=\beta-  \beta(\alpha^\vee) \alpha,\ \quad \beta\in \mathfrak{h}^*.$$  

We denote   $\mathring{\mathfrak{h}}$ (resp. $\mathring{\mathfrak{h}}_\R$)  the linear span over $\C$ (resp. $\R$) of $\alpha_1^\vee,\dots,\alpha_l^\vee$. The dual notions  $\mathring{\mathfrak{h}}^*$ and $\mathring{\mathfrak{h}}^*_\R$ are defined similarly. Then we have an orthogonal direct sum of subspaces: 
$$\mathfrak{h}=\mathring{\mathfrak{h}}_\R\oplus (\C K+\C d);\quad \mathfrak{h}^*=\mathring{\mathfrak{h}}^*_\R\oplus (\C \delta+\C \Lambda_0).$$
We set $\mathfrak{h}_\R=\mathring{\mathfrak{h}}_\R+ \R K+\R d,$ and $\mathfrak{h}_\R^*=\mathring{\mathfrak{h}}^*_\R+\R \delta+\R \Lambda_0$. 
\begin{notaetoile} For $\lambda\in \mathfrak{h}^*$ such that $\lambda=a\Lambda_0+z+b\delta$, $a,b\in \C$, $z\in \mathring{\mathfrak{h}}^*_\R$, denote   $\bar \lambda$ the projection of $\lambda$ on $\C\Lambda_0+\mathring{\mathfrak{h}}^*$ defined by $\bar\lambda=a\Lambda_0+z$, and by $\bar{\bar \lambda}$ its projection on $\mathring{\mathfrak{h}}^*$ defined by $\bar{\bar \lambda}=z$.
\end{notaetoile}
We denote   $\mathring{W}$ the subgroup  of $GL(\mathfrak{h}^*)$ generated by fundamental  reflections $s_{\alpha_i}$, $i=1,\dots,l$. Let $\Z(\mathring{W}.\theta^\vee)$ denote the lattice in $\mathring{\mathfrak{h}}_\R$ spanned over $\Z$ by the set $\mathring{W}.\theta^\vee$, where $$\theta^\vee=\sum_{i=1}^la_i^\vee\alpha_i^\vee,$$ and set $M=\nu(\Z(\mathring{W}.\theta^\vee)).$ Then $W$  is the semi-direct product $T\ltimes \mathring{W}$ (proposition 6.5 chapter 6 of \cite{Kac}) where  $T$ is the group of transformations $t_{\alpha}$, $\alpha\in M$, defined by 
$$t_\alpha(\lambda)=\lambda+\lambda(K)\alpha-((\lambda\vert \alpha)+\frac{1}{2}(\alpha\vert\alpha)\lambda(K))\delta, \quad \lambda\in \mathfrak{h}^*.$$

\subsection{Weights, highest-weight modules, characters} The following definitions and properties mainly come from chapter $9$ and $10$ of \cite{Kac}.  We denote   $P$ (resp. $P_+$) the set of integral (resp. dominant) weights defined by 
$$P=\{\lambda\in \mathfrak{h}^*:  \langle \lambda,\alpha^\vee_i \rangle\in \Z, \, i=0,\dots, l\},$$
 $$(\textrm{resp. } P_+=\{\lambda\in P:  \langle\lambda,\alpha^\vee_i \rangle\ge 0, \, i=0,\dots,l\}),$$
where $\langle.,.\rangle$ is the pairing between $\mathfrak h$ and its dual $\mathfrak h^*$.
The level of an integral weight $\lambda\in P$, is defined as the integer $(\delta\vert\lambda)$. For $k\in \N$, we denote   $P^k$ (resp. $P^k_+)$ the set of integral (resp. dominant) weights of level $k$ defined by
$$P^k=\{\lambda\in P: (\delta\vert\lambda)=k\}.$$
$$(\textrm{resp. }P^k_+=\{\lambda\in P_+: (\delta\vert\lambda)=k\}.)$$
 Recall that a $\mathfrak{g}$-module $V$ is called $\mathfrak{h}$-diagonalizable if it admits a weight space decomposition $V=\oplus_{\lambda\in \mathfrak{h}^*}V_\lambda$ by weight spaces $V_\lambda$ defined by 
$$V_\lambda=\{v\in V: \forall h\in \mathfrak{h},\, h.v=\lambda(h)v\}.$$ 
The category $\mathcal O$ is defined as the set of $\mathfrak{g}$-modules $V$ which are $\mathfrak{h}$-diagonalizable with finite dimensional weight spaces and such that there exists a finite number of elements $\lambda_1,\dots,\lambda_s\in\mathfrak{h}^*$ such that 
$$P(V)\subset \cup_{i=1}^s\{\mu\in \mathfrak{h}^*: \lambda_i-\mu\in \N \Delta_+\},$$
where $P(V)=\{\lambda\in \mathfrak{h}^*: V_\lambda\ne \{0\}\}$. One defines the formal character $\mbox{ch}(V)$ of a module $V$ from $\mathcal O$ by 
$$\mbox{ch}(V)=\sum_{\mu\in P(V)}\dim(V_\mu)e^{\mu}.$$  
For $\lambda\in P_+$ we denote   $V(\lambda)$ the irreducible module with highest weight $\lambda$. It belongs to the category $\mathcal O$. 
The Weyl character's formula (Theorem 10.4, chapter 10 of \cite{Kac}) states that 
\begin{align}\label{Weyl}
\mbox{ch}(V(\lambda))=\frac{\sum_{w\in W}\det(w)e^{w(\lambda+\rho)-\rho}}{\prod_{\alpha\in \Delta_+}(1-e^{-\alpha})^{\mbox{mult}(\alpha)}},
\end{align}
where $\mbox{mult}(\alpha)$ is the dimension of the root space $\mathfrak{g}_\alpha$ defined by 
$$\mathfrak{g}_\alpha=\{x\in \mathfrak{g}: \, \forall h\in \mathfrak{h},\, [h,x]=\alpha(h)x\},$$
for $\alpha\in \Delta$ and $\rho\in \mathfrak{h}^*$ is chosen such that $\rho(\alpha_i^\vee)=1$, for all $i\in\{0,\dots,l\}$. In particular 
\begin{align}\label{ParticularWeyl}
\prod_{\alpha\in \Delta_+}(1-e^{-\alpha})^{\mbox{mult}(\alpha)}=\sum_{w\in W}\det(w)e^{w(\rho)-\rho}.
\end{align}
Letting $e^{\mu}(h)=e^{\mu(h)}$, $h\in \mathfrak{h}$, the formal character $\mbox{ch}(V(\lambda))$ can be seen as a function   defined on its region of convergence. Actually  the  series 
$$\sum_{\mu \in P}\mbox{dim} (V(\lambda)_\mu)e^{\langle \mu ,h\rangle} $$ converges absolutely for every $h\in \mathfrak{h}$ such that $\mbox{Re}(\delta(h))>0$  (see chapter $11$ of \cite{Kac}). We denote $\mbox{ch}_\lambda(h)$ its limit.  For $\beta\in \mathfrak{h}$ such that $\Re(\beta\vert \delta)>0$, let $\mbox{ch}_\lambda(\beta)=\mbox{ch}_\lambda(\nu^{-1}(\beta))$.
\subsection{Theta functions} Connections between affine Lie algebras and theta functions are developed in chapter 13 of \cite{Kac}. We recall properties that we need for our purpose. For $\lambda\in P$ such that $(\delta\vert \lambda)=k$ one defines the classical theta function $\Theta_\lambda$ of degree $k$ by the series
$$\Theta_\lambda=e^{-\frac{(\lambda\vert \lambda)}{2k}\delta}\sum_{\alpha\in M}e^{t_\alpha(\lambda)}.$$
This series converges absolutely on $\{h\in \mathfrak{h}: \mbox{Re}(\delta(h))>0\}$ to an analytic function. 
As $$e^{\frac{(\lambda\vert\lambda)}{2k}\delta}\sum_{w\in \mathring{W}}\det(w)\Theta_{w(\lambda)}=\sum_{w\in W}\det(w)e^{w(\lambda)},$$
this last series  converges absolutely on $\{h\in \mathfrak{h}: \mbox{Re}(\delta(h))>0\}$ to an analytic function too. 

\section{Markov chains on the sets of integral or dominant weights}\label{Markovchains} Let us choose for this section a dominant weight $\omega\in P_+$  and $h\in \mathfrak{h}_\R$ such that  $ \delta(h)\in \R_+^*$.
\paragraph{\bf Random walks on $P$}  We define a probability measure $\mu_\omega$ on $P$ letting 
\begin{align}\label{weightmeasure}
\mu_\omega(\beta)=\frac{\dim(V(\omega)_\beta)}{\mbox{ch}_\omega(h) }e^{\langle \beta ,h\rangle}, \quad \beta \in P.
\end{align}
\begin{rem}\label{Fourier}
If $(X(n),n \ge 0)$ is a random walk on $P$ whose increments are distributed according to $\mu_\omega$,   keep in mind that  the function 
$$z\in  \mathring{\mathfrak{h}} _\R\mapsto\Big(\frac{\mbox{ch}_\omega(iz+h)}{\mbox{ch}_\omega(h)}\Big)^n,$$
is the Fourier transform of the projection of $X(n)$ on $\mathring{\mathfrak{h}}^* _\R$.
\end{rem}
\bigskip
\paragraph{\bf Markov chains on $P_+$}   Given two irreducible representations $V(\lambda)$ and $V(\omega)$, the tensor product of $\mathfrak{g}$-modules $V(\lambda)\otimes V(\beta)$ decomposes has a direct sum of irreducible modules. The following decomposition
$$V(\lambda)\otimes V(\omega)=\sum_{\beta\in P_+} M_{\lambda}(\beta)V(\beta),$$
where $M_{\lambda}(\beta)$ is the multiplicity of the module with highest weight $\beta$ in the decomposition of $V(\omega)\otimes V(\lambda)$, leads to the  definition a transition probability  $Q_\omega$ on $P_+$ given by
\begin{align}\label{Markovdominant}
Q_\omega(\lambda,\beta)=\frac{\mbox{ch}_\beta(h)}{\mbox{ch}_\lambda(h) \mbox{ch}_\omega(h)}M_{\lambda}(\beta),\quad \lambda,\beta \in P_+.
\end{align} 
For $n\in \N$, $\omega\in P_+$, $\beta\in P,$   denote   $m_{\omega^{\otimes n}}(\beta)$   the multiplicity  of the weight $\beta$ in $V(\omega)^{\otimes n}$. 
For $n\in \N$, $\lambda,\beta\in P_+,$  denote   $M_{\lambda,\omega^{\otimes n}}(\beta)$   the multiplicity defined by
$$V(\lambda)\otimes V(\omega)^{\otimes n}=\sum_{\beta\in P_+}M_{\lambda\otimes \omega^{\otimes n}}(\beta)V(\beta).$$ 
The Weyl character formula implies the following lemma, which is known as a consequence  of the Brauer-Klimyk rule when  $\mathfrak{g}$ is a complex semi-simple Lie algebra.
\begin{lem} \label{BrauerKlimyk}  For $n\in \N$, $\lambda,\beta\in P_+,$, one has
\begin{align*}
M_{\lambda\otimes \omega^{\otimes n}}(\beta)=\sum_{w\in W}\det(w)m_{\omega^{\otimes n}}(w(\beta+\rho)-(\lambda+\rho)),
\end{align*}
\end{lem}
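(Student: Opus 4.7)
The plan is to deduce the formula by applying the Weyl character formula to both sides of the tensor product decomposition, and exploiting the $W$-invariance of weight multiplicities. Throughout let $D=\prod_{\alpha\in\Delta_+}(1-e^{-\alpha})^{\mbox{mult}(\alpha)}$, so that by (\ref{Weyl}) one has $D\cdot\mbox{ch}(V(\gamma))=A_\gamma$, where $A_\gamma:=\sum_{w\in W}\det(w)e^{w(\gamma+\rho)-\rho}$ for every $\gamma\in P_+$.

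First I would translate the module decomposition $V(\lambda)\otimes V(\omega)^{\otimes n}=\bigoplus_{\beta\in P_+}M_{\lambda\otimes\omega^{\otimes n}}(\beta)V(\beta)$ into an identity of characters, then multiply by $D$ to obtain
\begin{align*}
A_\lambda \cdot \mbox{ch}(V(\omega))^n=\sum_{\beta\in P_+}M_{\lambda\otimes\omega^{\otimes n}}(\beta)\,A_\beta.
\end{align*}
Expanding $\mbox{ch}(V(\omega))^n=\mbox{ch}(V(\omega)^{\otimes n})=\sum_{\nu}m_{\omega^{\otimes n}}(\nu)e^{\nu}$, the left-hand side becomes $\sum_\nu m_{\omega^{\otimes n}}(\nu)\sum_{w\in W}\det(w)e^{w(\lambda+\rho)+\nu-\rho}$.

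The key step is then to use the fact that $V(\omega)^{\otimes n}$ is a $\mathfrak{g}$-module, so its weight multiplicities are $\mathring W$-invariant, and more importantly $W$-invariant as functions on $\mathfrak h^*$. After substituting $\nu\mapsto w\nu$ in the inner sum (a rearrangement which is legitimate by absolute convergence of the characters on the region $\Re(\delta(h))>0$ recalled in Section~\ref{Affine Lie algebras and their representations}), the left-hand side takes the form
\begin{align*}
\sum_{\nu}m_{\omega^{\otimes n}}(\nu)\sum_{w\in W}\det(w)\,e^{w(\lambda+\rho+\nu)-\rho}.
\end{align*}

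To conclude, I would match coefficients of $A_\beta$ for each $\beta\in P_+$. The inner alternating sum $\sum_w\det(w)e^{w(\lambda+\rho+\nu)-\rho}$ vanishes unless $\lambda+\rho+\nu$ is $W$-regular, and in that case there exist unique $w_0\in W$ and $\beta\in P_+$ with $\lambda+\rho+\nu=w_0(\beta+\rho)$; a reindexing $w\mapsto ww_0^{-1}$ shows that the inner sum equals $\det(w_0)A_\beta$. Since $\nu=w_0(\beta+\rho)-(\lambda+\rho)$ in that case, grouping by $\beta$ gives on the left-hand side $\sum_{\beta\in P_+}\bigl(\sum_{w\in W}\det(w)m_{\omega^{\otimes n}}(w(\beta+\rho)-(\lambda+\rho))\bigr)A_\beta$, and the linear independence of the family $(A_\beta)_{\beta\in P_+}$ (equivalent to that of the irreducible characters $\mbox{ch}(V(\beta))$) yields the claim. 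The main obstacle is not algebraic but analytic: one must justify the rearrangement of the double series, which rests on the absolute convergence results for characters and theta functions recalled earlier.
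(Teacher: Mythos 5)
Your argument is correct and is essentially the paper's proof: the paper simply cites Proposition 2.1 of Stembridge (noting the argument carries over verbatim to Kac--Moody algebras), and that argument is exactly the one you give --- multiply the character identity by the Weyl denominator, use $W$-invariance of the weight multiplicities of $V(\omega)^{\otimes n}$ to rewrite the left side as an alternating sum over $W$-orbits, and match coefficients of the linearly independent numerators $A_\beta$. The only point worth flagging is that in the affine setting the reduction of a regular weight $\lambda+\rho+\nu$ to a unique $w_0(\beta+\rho)$ with $\beta\in P_+$ uses that this weight has positive level (so it lies in the Tits cone), which holds here since $\lambda,\omega\in P_+$ and $\rho$ has level $h^\vee>0$.
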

\begin{proof} See  proposition 2.1 of \cite{Stembridge} and remark below. The proof is exactly the same in the framework of Kac-Moody algebras.
\end{proof} 
Let us consider the random walk $(X(n))_{n\ge 0}$ defined above and its projection $(\bar X(n))_{n\ge 0}$ on $(\R\Lambda_0+\mathring{\mathfrak{h}}^*_\R)$. Denote    $\bar P_\omega$ the transition kernel of this last random walk. The next property is immediate.
\begin{lem} Let $\beta_0,\lambda_0$ be two weights  in $ (\R\Lambda_0+\mathring{\mathfrak{h}}^*_\R)$. The transition kernel $\bar P_\omega$ satisfies 
for every $n\in \N$,
\begin{align*}
\bar P_\omega^n(\lambda_0,\beta_0) =\sum_{\beta\in P:\bar\beta=\beta_0}e^{\langle\beta-\lambda_0,h\rangle}\frac{m_{\omega^{\otimes n}}(\beta-\lambda_0)}{\mbox{ch}^n_{\omega}(h)}\\
\end{align*}
\end{lem}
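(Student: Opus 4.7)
The plan is to unfold the definition of $\bar P_\omega^n$ directly. Since the random walk $(X(n))_{n\ge 0}$ on $P$ has independent increments of common law $\mu_\omega$, for any starting point $\lambda_0\in P$ the $n$-step law of $X(n)-\lambda_0$ is the convolution $\mu_\omega^{*n}$. Projecting onto $\R\Lambda_0+\mathring{\mathfrak{h}}_\R^*$ and summing over the fiber of the map $\beta\mapsto\bar\beta$ then gives
$$\bar P_\omega^n(\lambda_0,\beta_0)=\sum_{\beta\in P:\,\bar\beta=\beta_0}\mu_\omega^{*n}(\beta-\lambda_0),$$
so it suffices to express $\mu_\omega^{*n}$ in closed form in terms of tensor product weight multiplicities.

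The main step is the identity
$$\mu_\omega^{*n}(\mu)=\frac{e^{\langle\mu,h\rangle}}{\mbox{ch}^n_\omega(h)}\,m_{\omega^{\otimes n}}(\mu),\qquad \mu\in P.$$
To establish it I would insert the definition (\ref{weightmeasure}) of $\mu_\omega$ into the convolution and use that the exponential factor behaves multiplicatively under sums of increments:
\begin{align*}
\mu_\omega^{*n}(\mu)&=\sum_{\mu_1+\cdots+\mu_n=\mu}\prod_{i=1}^n\frac{\dim V(\omega)_{\mu_i}}{\mbox{ch}_\omega(h)}\,e^{\langle\mu_i,h\rangle}\\
&=\frac{e^{\langle\mu,h\rangle}}{\mbox{ch}^n_\omega(h)}\sum_{\mu_1+\cdots+\mu_n=\mu}\prod_{i=1}^n\dim V(\omega)_{\mu_i}.
\end{align*}
The remaining inner sum is precisely $\dim(V(\omega)^{\otimes n})_\mu=m_{\omega^{\otimes n}}(\mu)$, from the standard description of the $\mu$-weight space of an $n$-fold tensor product as the direct sum, over compositions $\mu=\mu_1+\cdots+\mu_n$, of the tensor products $V(\omega)_{\mu_1}\otimes\cdots\otimes V(\omega)_{\mu_n}$.

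Substituting $\mu=\beta-\lambda_0$ into the fibered sum above yields the claimed formula. There is no real obstacle; the statement is a bookkeeping reformulation of the fact that the exponential tilt $\mu_\omega$ converts weight multiplicities in $V(\omega)^{\otimes n}$ into probabilities (this is essentially the same mechanism underlying the Fourier transform expression in Remark~\ref{Fourier}, here specialized to a point rather than integrated against a character), which explains why the paper labels the lemma as \emph{immediate}.
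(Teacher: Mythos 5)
Your argument is correct and is exactly the computation the paper leaves implicit when it calls the lemma ``immediate'': the exponential tilt factors out of the $n$-fold convolution of $\mu_\omega$, the remaining sum over compositions of $\mu$ is $\dim(V(\omega)^{\otimes n})_\mu=m_{\omega^{\otimes n}}(\mu)$, and summing over the fiber of $\beta\mapsto\bar\beta$ gives the projected kernel. Nothing to add.
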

Let us consider a Markov process $(\Lambda(n))_{n\ge 0}$ whose Markov kernel is given by (\ref{Markovdominant}). If $\lambda_1$ and $\lambda_2$ are two dominant weights  such that $\lambda_1= \lambda_2\,(mod\, \delta)$ then the irreducible modules $V(\lambda_1)$ and $V(\lambda_2)$ are isomorphic. Thus if we consider the random process $(\bar{\Lambda}(n),n\ge 0)$, where $\bar{\Lambda}(n)$ is the projection of  $\Lambda(n)$ on $(\R\Lambda_0+\mathring{\mathfrak{h}}^*_\R)$, then $(\bar{\Lambda}(n),n\ge 1)$ is a Markov process whose transition kernel is denoted   $\bar Q_\omega$.

\begin{prop}\label{reflectiondiscrete}  Let $\beta_0,\lambda_0$ be two dominant weights  in $( \R\Lambda_0+\mathring{\mathfrak{h}}^*_\R)$, and $n$ be a positive integer. The transition kernel $\bar Q_\omega$ satisfies  
\begin{align*}
\bar Q_\omega^n(\lambda_0,\beta_0)& =\frac{\ch_{\beta_0}(h)e^{-\langle\beta_0,h\rangle}}{\ch_{\lambda_0}(h)e^{-\langle\lambda_0,h\rangle} }  \sum_{w\in W}\det(w)e^{\langle w(\lambda_0+\rho)-(\lambda_0+\rho),h\rangle}\bar P_\omega^n(\overline{w(\lambda_0+\rho)-\rho},\beta_0)
\end{align*}
\end{prop}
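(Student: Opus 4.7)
\medskip

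\noindent\textbf{Proof proposal.}
The plan is to start from the definition of $Q_\omega$, iterate it, then apply Lemma \ref{BrauerKlimyk} and the $W$-invariance of the character of $V(\omega)^{\otimes n}$, and finally recognize the remaining sum as $\bar{P}_\omega^n$ evaluated at $\overline{w(\lambda_0+\rho)-\rho}$ via a $\delta$-shift.

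First I would iterate (\ref{Markovdominant}). A straightforward induction on $n$, together with the definition of $M_{\lambda_0\otimes\omega^{\otimes n}}(\beta)$ as the multiplicity of $V(\beta)$ in $V(\lambda_0)\otimes V(\omega)^{\otimes n}$, yields
\begin{align*}
Q_\omega^n(\lambda_0,\beta)=\frac{\mbox{ch}_\beta(h)}{\mbox{ch}_{\lambda_0}(h)\mbox{ch}_\omega^n(h)}M_{\lambda_0\otimes\omega^{\otimes n}}(\beta),\quad \beta\in P_+.
\end{align*}
Since $\delta(\alpha_i^\vee)=0$ for all $i$, a weight $\beta_0+c\delta$ is dominant iff $\beta_0$ is, and $\mbox{ch}_{\beta_0+c\delta}(h)=e^{c\langle\delta,h\rangle}\mbox{ch}_{\beta_0}(h)=e^{\langle(\beta_0+c\delta)-\beta_0,h\rangle}\mbox{ch}_{\beta_0}(h)$. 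Using the identification $\bar Q_\omega^n(\lambda_0,\beta_0)=\sum_{\beta:\bar\beta=\beta_0}Q_\omega^n(\lambda_0,\beta)$ (taking for $\lambda_0$ the representative with zero $\delta$-component), this gives
\begin{align*}
\bar Q_\omega^n(\lambda_0,\beta_0)=\frac{\mbox{ch}_{\beta_0}(h)}{\mbox{ch}_{\lambda_0}(h)\mbox{ch}_\omega^n(h)}\sum_{\beta:\bar\beta=\beta_0}e^{\langle\beta-\beta_0,h\rangle}M_{\lambda_0\otimes\omega^{\otimes n}}(\beta).
\end{align*}

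Next I would apply Lemma \ref{BrauerKlimyk} and then use that $V(\omega)^{\otimes n}$ is a $\mathfrak{g}$-module, so its character, hence the multiplicity $m_{\omega^{\otimes n}}$, is $W$-invariant: $m_{\omega^{\otimes n}}(w(\beta+\rho)-(\lambda_0+\rho))=m_{\omega^{\otimes n}}(\beta+\rho-w^{-1}(\lambda_0+\rho))$. After substituting $w\to w^{-1}$ (which preserves $\det(w)$) and swapping the order of summation, one obtains with the notation $\mu_w:=w(\lambda_0+\rho)-\rho$:
\begin{align*}
\bar Q_\omega^n(\lambda_0,\beta_0)=\frac{\mbox{ch}_{\beta_0}(h)}{\mbox{ch}_{\lambda_0}(h)\mbox{ch}_\omega^n(h)}\sum_{w\in W}\det(w)\sum_{\beta:\bar\beta=\beta_0}e^{\langle\beta-\beta_0,h\rangle}m_{\omega^{\otimes n}}(\beta-\mu_w).
\end{align*}

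Finally I would identify the inner sum with $\bar P_\omega^n(\bar\mu_w,\beta_0)$. Write $\mu_w=\bar\mu_w+c_w\delta$. Applied to the starting point $\bar\mu_w$, the previous lemma gives
\begin{align*}
\bar P_\omega^n(\bar\mu_w,\beta_0)=\sum_{\beta:\bar\beta=\beta_0}e^{\langle\beta-\bar\mu_w,h\rangle}\frac{m_{\omega^{\otimes n}}(\beta-\bar\mu_w)}{\mbox{ch}_\omega^n(h)},
\end{align*}
and the substitution $\beta\to\beta+c_w\delta$ (which preserves the constraint $\bar\beta=\beta_0$ since $\bar\delta=0$) turns this into
\begin{align*}
\bar P_\omega^n(\bar\mu_w,\beta_0)=\sum_{\beta:\bar\beta=\beta_0}e^{\langle\beta-\mu_w,h\rangle}\frac{m_{\omega^{\otimes n}}(\beta-\mu_w)}{\mbox{ch}_\omega^n(h)}.
\end{align*}
Inserting this and factoring $e^{\langle\mu_w-\lambda_0,h\rangle}$ and $e^{\langle\lambda_0-\beta_0,h\rangle}$ out of the exponentials recovers exactly the stated formula.

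The main delicate point is the last step: controlling the $\delta$-discrepancy between $\mu_w$ and $\bar\mu_w$. It works cleanly because $W$ fixes $\delta$ pointwise, so $\mu_w-\bar\mu_w\in\R\delta$, and because the summation set $\{\beta:\bar\beta=\beta_0\}$ is stable under shifts by $\delta$. Everything else is bookkeeping with the Weyl character formula and the definition of the chain.
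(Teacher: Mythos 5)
Your proof is correct and follows essentially the same route as the paper's: iterate the kernel to express $Q_\omega^n$ through $M_{\lambda_0\otimes\omega^{\otimes n}}$, apply Lemma \ref{BrauerKlimyk}, and resum the inner sum into $\bar P^n_\omega(\overline{w(\lambda_0+\rho)-\rho},\beta_0)$; you in fact make explicit two steps the paper leaves implicit, namely the $W$-invariance of $m_{\omega^{\otimes n}}$ combined with the change of variable $w\to w^{-1}$, and the $\delta$-shift identifying the sum based at $\mu_w$ with the one based at $\bar\mu_w$. The only quibble is the direction of that shift (it should be $\beta\to\beta-c_w\delta$ rather than $\beta\to\beta+c_w\delta$), which is immaterial since the index set $\{\beta:\bar\beta=\beta_0\}$ is stable under shifts along $\delta$.
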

\begin{proof}
Using Lemma (\ref{BrauerKlimyk}), one obtains for any dominant weight $\lambda_0,\beta_0\in (\R\Lambda_0+\mathring{\mathfrak{g}}_\R^*)$,
\begin{align*}
\bar Q_\omega^n(\lambda_0,\beta_0)&=\frac{\mbox{ch}_{\beta_0}(h)}{\mbox{ch}_{\lambda_0}(h) \mbox{ch}^n_{\omega}(h)}\sum_{\beta\in P_+:\bar\beta=\beta_0}e^{\langle\beta-\bar\beta,h\rangle}M_{\lambda,\omega^{\otimes n}}(\beta)\\
 &=\frac{\mbox{ch}_{\beta_0}(h)}{\mbox{ch}_{\lambda_0}(h) \mbox{ch}^n_{\omega}(h)}\sum_{\beta\in P:\bar\beta=\beta_0}e^{\langle\beta-\bar\beta,h\rangle}\sum_{w\in W}\det(w)m_{\omega^{\otimes n}}(w(\beta+\rho)-(\lambda+\rho)).\\ 
&=\frac{\mbox{ch}_{\beta_0}(h)e^{-\langle\beta_0,h\rangle}}{\mbox{ch}_{\lambda_0}(h)e^{-(\lambda_0,h)} }  \sum_{w\in W}\det(w)e^{\langle w(\lambda_0+\rho)-(\lambda_0+\rho),h\rangle}\bar P_\omega^n(\overline{w(\lambda_0+\rho)-\rho},\beta_0).
\end{align*} 
\end{proof}
\section{Scaling limit of  Random walks on $P$}\label{Scaling limit of  Random walks}
Let us fix $\rho=h^\vee\Lambda_0+\bar{\bar \rho}$, where $\bar{\bar \rho}$ is  half the sum of positive roots in $\mathring{\mathfrak{h}}^*$.  For $n\in \N^*$, we consider a random walk $(X^{n}(k),k\ge 0)$ starting from $0$, whose increments are distributed according to a probability measure  $\mu_{\omega}$ defined by (\ref{weightmeasure}) with $\omega\in P_+^{h^\vee}$ and $h=\frac{1}{n}\nu^{-1}( \rho)$. In particular $X^n(k)$ is an integral weight of level $h^\vee k$ for $k\in\N$. Proposition \ref{ConBrown} gives the scaling limit of the process  $(\bar{\bar{X}}^{n}(k),k\ge 0)$,

\begin{prop}\label{ConBrown} The sequence of processes $(\frac{1}{n}\bar{\bar{X}}^{n}([nt]),t\ge 0)_{n\ge 0}$ converges towards a standard Brownian motion on $\mathring{\mathfrak{h}}_\R^*$ with   drift $\bar{\bar{\rho}}$.
\end{prop}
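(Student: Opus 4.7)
The natural strategy is Fourier-theoretic. By Remark~\ref{Fourier}, the characteristic function of $\tfrac{1}{n}\bar{\bar X}^n([nt])$ evaluated at $y\in \mathring{\mathfrak{h}}_\R$ equals
$$\phi_n(y)=\left[\frac{\mbox{ch}_{h^\vee\Lambda_0}(h_n+iy/n)}{\mbox{ch}_{h^\vee\Lambda_0}(h_n)}\right]^{[nt]},\qquad h_n=\tfrac{1}{n}\nu^{-1}(\rho).$$
Because each row of the triangular array $(\bar{\bar X}^n(k))_k$ is a random walk with i.i.d.\ increments, L\'evy's continuity theorem together with the standard functional CLT for such arrays reduces the proposition to the pointwise convergence $\phi_n(y)\to \exp\bigl(it\langle y,\bar{\bar\rho}\rangle-\tfrac{t}{2}(y\vert y)\bigr)$, which in turn amounts to showing $\mathbb{E}[\bar{\bar X}^n(1)]\to \bar{\bar\rho}$ and $n^{-1}\mathrm{Cov}(\bar{\bar X}^n(1))\to\mathrm{Id}$ with respect to $(\cdot\vert\cdot)$, plus a Lindeberg-type estimate on the higher cumulants.

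The heart of the argument is therefore the expansion of the character ratio. I would start by combining the Weyl character formula~(\ref{Weyl}) with the denominator identity~(\ref{ParticularWeyl}) to rewrite
$$\mbox{ch}_{h^\vee\Lambda_0}(h)=\frac{\sum_{w\in W}\det(w)\,e^{\langle w(h^\vee\Lambda_0+\rho),h\rangle}}{\sum_{w\in W}\det(w)\,e^{\langle w(\rho),h\rangle}},$$
and, using $W=T\ltimes \mathring{W}$ and the explicit action of $t_\alpha$ recalled in Section~2, split both sums into $\mathring{W}$-alternating combinations of classical theta series on the lattice $M\subset \mathring{\mathfrak{h}}^*_\R$ (level $2h^\vee$ in the numerator, level $h^\vee$ in the denominator). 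The crucial feature is that $\delta(h_n+iy/n)=h^\vee/n\to 0$, so the theta series sit in the $\tau\to i0^+$ regime where the characters are singular and no naive Taylor expansion at $h_n$ is available. The remedy is Jacobi's imaginary transformation, i.e.\ Poisson summation on $M$, which turns each theta series into a rapidly convergent Gaussian sum on the dual lattice. Because $\bar{\bar\rho}$ lies strictly in the open Weyl chamber, the dominant contribution in both alternating sums comes from the identity element of $\mathring W$ and the origin of the dual lattice; after cancelling the common singular prefactor between numerator and denominator, the remaining expansion in $1/n$ yields precisely $i\langle y,\bar{\bar\rho}\rangle$ at first order and, once multiplied by $[nt]\sim nt$, $-\tfrac{t}{2}(y\vert y)$ at second order, all other contributions being exponentially suppressed.

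The hard part is to control this Jacobi/Poisson asymptotic uniformly in $y$: one must verify that the leading Gaussian prefactors of numerator and denominator match correctly (this is governed by the affine analogue of the Freudenthal--de~Vries strange formula $(\rho\vert\rho)=2h^\vee\,\dim\mathfrak{g}/24$), that the non-identity Weyl-group contributions and non-origin dual-lattice terms are genuinely exponentially small uniformly on compact $y$-sets in the complex-shifted regime $h=h_n+iy/n$, and that the order-$1/n^2$ term produces exactly the normalization of a standard Brownian motion. The same uniform estimates provide the Lindeberg bound for the triangular array, so that tightness in the Skorokhod topology follows from the i.i.d.\ structure of increments in each row and process convergence is automatic.
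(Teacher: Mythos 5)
Your Fourier/Poisson-summation strategy is essentially the paper's proof: the Jacobi imaginary transformation of the $\mathring{W}$-alternating theta sums that you propose to carry out by hand is exactly the transformation law for normalized characters packaged in Propositions 13.8 and 13.9 of \cite{Kac}, which the paper invokes directly, and your identification of the dominant term and of the resulting Gaussian limit of the character ratio matches the paper's argument via the minimal modular anomaly $m_\Lambda$, attained at $\Lambda=h^\vee\Lambda_0$. The paper is terser on the tightness/functional-convergence step, but the route is the same.
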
 
\begin{proof} The key ingredients for the proof are Theorems 13.8 and 13.9 of \cite{Kac}, which provide a transformation law for normalized characters. The two theorems deal with two different classes of affine Lie algebras. Let us make the proof in the framework of Theorem 13.8. The proof is  similar in the framework of  Theorem 13.9.  For the affine Lie algebras considered in   Theorem 13.8 one has  that for $n\ge 1$ and $z\in \mathring{\mathfrak{h}}^*$, 
\begin{align*}
\mbox{ch}_{\omega}&(\frac{1}{n}(\rho+z))\\
&\quad \quad =C_ne^{\frac{1}{2 n}\vert\vert\bar{\bar \rho}+z\vert\vert^2}\sum_{\Lambda\in P^{h^\vee}_+\mbox{mod}\, \C\delta }S_{\omega,\Lambda}e^{-m_\Lambda \frac{4\pi^2 n}{h^\vee}}\mbox{ch}_\Lambda(\frac{4\pi^2n}{h^\vee}\Lambda_0+2i\pi \frac{\bar{\bar\rho}+z}{h^\vee}),
\end{align*} 
where $C_n$ is a constant independent of $z$,   $m_\Lambda=\frac{\vert\vert \Lambda+\rho\vert\vert^2}{4h^\vee}-\frac{\vert\vert \rho\vert\vert^2}{2h^\vee}$ and $S_{\omega,\Lambda}$ is a coefficient independent of $z$ and $n$, for $\Lambda\in P^{h^\vee}_+$.  Notice that the sum is well-defined as for $\lambda_1=\lambda_2\,\mbox{mod}\, \C\delta$ one has 
$$e^{-m_{\lambda_1} \frac{4\pi^2 n}{h^\vee}}\mbox{ch}_{\lambda_1}(\frac{4\pi^2n}{h^\vee}\Lambda_0+2i\pi \frac{\bar{\bar\rho}+z}{h^\vee})=e^{-m_{\lambda_2} \frac{4\pi^2 n}{h^\vee}}\mbox{ch}_{\lambda_2}(\frac{4\pi^2n}{h^\vee}\Lambda_0+2i\pi \frac{\bar{\bar\rho}+z}{h^\vee}).$$

Let us prove the convergence. Let  $i\in\{1,\dots,l\}$. One has $\langle h^\vee\Lambda_0,\alpha_i^\vee\rangle=0$, which implies that $ V(h^\vee \Lambda_0)_{h^\vee \Lambda_0-\alpha_i}=\{0\}$. Consequentely,
$$\textrm{if $\beta\in P$ and $\dim(V(h^\vee\Lambda)_\beta)\ne 0,$ then  $\beta=h^\vee\Lambda_0-\sum_{k=0}^li_k\alpha_k,$}$$
where   $i_k$ is a nonnegative integer, for $k\in\{1,\dots,l\}$, and $i_0$ is a positive integer, which implies that $(\beta\vert\Lambda_0)\le -1$. Moreover, the action of   $f_k$,  for  $ k\in\{0,\dots,l\}$, on an integrable highest weight module being locally nilpotent, the number of weights $\beta$ such that $\dim(V(h^\vee\Lambda_0)_\beta)\ne 0$ and $(\beta\vert\Lambda_0)=-1$ is finite.  As the characters are defined on the set 
$$\{\lambda\in \mathfrak{h}^*: \Re(\lambda\vert \delta)>0\},$$ 
by absolutely convergent series, it implies that $$
\mbox{ch}_{h^\vee\Lambda_0} (\frac{4\pi^2n}{h^\vee}\Lambda_0+2i\pi \frac{\bar{\bar\rho}+z}{h^\vee})$$ is equal to
\begin{align*} 1+(1+\epsilon(n))e^{-\frac{4n\pi^2}{h^\vee}}\sum_{\beta: (\beta\vert \Lambda_0)=-1}\dim V(h^\vee\Lambda_0)_\beta e^{ (\beta\vert 2i\pi\frac{\bar\bar\rho+z}{h^\vee})},
\end{align*}
where $\lim_{n\to\infty}\epsilon(n)=0$. Thus
\begin{align}\label{convcar}
\lim_{n\to\infty} \Big(\mbox{ch}_{h^\vee\Lambda_0} (\frac{4\pi^2n}{h^\vee}\Lambda_0+2i\pi \frac{\bar{\bar\rho}+z}{h^\vee})\Big)^{[nt]}=1.
\end{align}

 Let $\Lambda\in P_+^{h^\vee}$ such that $(\Lambda\vert \Lambda_0)=0$. As previously,   if $\beta\in P$  and  $\dim(V(\Lambda)_\beta)\ne 0$ then $(  \beta\vert \Lambda_0)\le 0$, and  the number of weights $\beta$ such that $\dim(V(\Lambda)_\beta)\ne 0$ and $(\beta\vert \Lambda_0)= 0$ is finite. Thus,  
$$  \mbox{ch}_\Lambda(\frac{4\pi^2n}{h^\vee}\Lambda_0+2i\pi \frac{\bar{\bar\rho}+z}{h^\vee}),$$
 is bounded independently of $n$. Besides, one easily verifies that for such a $\Lambda$ one has $m_\Lambda\ge m_{h^\vee\Lambda_0}$ and that $m_\Lambda=m_{h^\vee\Lambda_0}$ implies $\Lambda=h^\vee\Lambda_0$. 
Thus 
\begin{align*}
\Big(1+\sum_{\Lambda\in P^{h^\vee}_+\setminus\{h^\vee \Lambda_0\}\mbox{mod}\, \C\delta }\frac{S_{\omega,\Lambda}}{S_{\omega,h^\vee \Lambda_0}}e^{-(m_\Lambda-m_{h^\vee\Lambda_0}) \frac{4\pi^2 n}{h^\vee}}\frac{\mbox{ch}_\Lambda(\frac{4\pi^2n}{h^\vee}\Lambda_0+2i\pi \frac{\bar{\bar\rho}+z}{h^\vee})}{\mbox{ch}_{h^\vee\Lambda_0} (\frac{4\pi^2n}{h^\vee}\Lambda_0+2i\pi \frac{\bar{\bar\rho}+z}{h^\vee})}\Big)^{[nt]}
\end{align*}
converges towards $1$ when $n$ goes to infinity.
The last convergence  and Theorem 13.8 of \cite{Kac}, recalled at the beginning of the proof, imply
$$\lim_{n\to \infty}\Big(\frac{\mbox{ch}_{\omega}(\frac{1}{n}(\rho+z))}{C_nS_{\omega,h^\vee \Lambda_0}e^{-m_{h^\vee \Lambda_0}\frac{4\pi^2n}{h^\vee}}\mbox{ch}_{h^\vee\Lambda_0} (\frac{4\pi^2n}{h^\vee}\Lambda_0+2i\pi \frac{\bar{\bar\rho}+z}{h^\vee})}\Big)^{[nt]}=e^{\frac{t}{2}\vert\vert\bar{\bar \rho}+z\vert\vert^2}.$$ 
 Finally, using convergence (\ref{convcar})  one obtains  
$$\lim_{n\to \infty}\Big(\frac{\mbox{ch}_{\omega}(\frac{1}{n}(\rho+z))}{\mbox{ch}_{\omega}(\frac{1}{n}\rho)}\Big)^{[nt]}=e^{\frac{t}{2}(\vert\vert\bar{\bar \rho}+z\vert\vert^2-\vert\vert\bar{\bar \rho}\vert\vert^2)},$$
which achieves the proof by remark (\ref{Fourier}).
\end{proof}    
\section{A conditioned space-time Brownian motion}\label{Brownian}
Denote    $\mathcal C$ the fundamental Weyl chamber defined by
$$\mathcal C=\{x\in \mathfrak{h}^* : \langle x,\alpha_i^\vee\rangle \ge0,\, i=0,\dots,l\}.$$ 
Let us consider a standard Brownian motion $(B_t)_{t\ge 0}$ on $  \mathring{\mathfrak{h}}^*_\R$.
 We consider a random process $(\tau_t\Lambda_0+B_t)_{t\ge 0}$ on $  (\R\Lambda_0+\mathring{\mathfrak{h}}^*_\R)$. For     $x\in ( \R\Lambda_0+\mathring{\mathfrak{h}}^*_\R)$, denote    $\P^0_{x}$ (resp. $\P_{x}^\rho$),  a probability under which $\tau_t=(x\vert\delta)+th^\vee,$ $\forall t\ge 0$, and  $(B_t)_{t\ge 0}$ is a standard Brownian motion  (resp.   a standard Brownian motion with drift $ \bar{\bar\rho}$) starting from $\bar {\bar x}$. Under $\P^0_{x}$ (resp. $\P^\rho_{x}$), the stochastic process $(\tau_t\Lambda_0+B_t)_{t\ge 0}$ has a transition probability semi-group $(p_t)_{t\ge 0}$ (resp. $(p^\rho_t)_{t\ge 0}$) defined by
$$p_t(x,y)=\frac{1}{(2\pi t)^{\frac{l}{2}}}e^{-\frac{1}{2t}\vert \vert y-x\vert \vert^2}1_{(y\vert \delta)=th^\vee+(x\vert \delta)},\quad x,y\in (\R\Lambda_0+\mathring{\mathfrak{h}}^* _\R).$$ 
$$(\textrm{resp. } p^\rho_t(x,y)=\frac{1}{(2\pi t)^{\frac{l}{2}}}e^{-\frac{1}{2t}\vert \vert y-\bar{\bar \rho} t-x\vert \vert^2}1_{(y\vert \delta)=th^\vee+(x\vert \delta)},\quad x,y\in (\R\Lambda_0+\mathring{\mathfrak{h}}^* _\R).)$$ Let $X_t=\tau_t\Lambda_0+B_t$, for $t\ge 0$, and consider the stopping time $T$ defined by 
$$T=\inf\{t\ge 0: X_t\notin \mathcal C\}.$$ 
The following proposition gives the probability for $(X_t)_{t\ge 0}$ to remain forever in $\mathcal C$, under $\P_x^\rho$, for $x\in \mathcal C$. 
\begin{prop}\label{Danslachambre} Let $ x\in (\R\Lambda_0+\mathring{\mathfrak{h}}^*_\R)\cap \mathcal C$. One has
\begin{align*}\P^\rho_{x}(T=+\infty) 
&=\sum_{w\in W}\det(w)e^{(  x,w(\rho)-\rho)}.
\end{align*}
\end{prop}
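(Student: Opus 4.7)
I would identify the candidate
\begin{align*}
h(x) := \sum_{w \in W}\det(w) e^{(x, w\rho - \rho)}
\end{align*}
as a bounded $L$-harmonic function on $\mathcal{C}$ that vanishes on $\partial\mathcal{C}$, where $L = h^\vee\partial_\tau + \tfrac{1}{2}\Delta_z + (\bar{\bar{\rho}}, \nabla_z)$ is the infinitesimal generator of $(X_t)$ under $\P^\rho_x$, with $\tau$ the $\Lambda_0$-coordinate and $z$ the $\mathring{\mathfrak{h}}^*_\R$-coordinate. Then $(h(X_{t\wedge T}))_{t\ge 0}$ is a bounded martingale, and letting $t \to \infty$ will give $h(x) = \P_x^\rho(T = +\infty)$.

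\textbf{Harmonicity and boundary vanishing.} For $\mu := w\rho - \rho$, decompose $\mu = c\Lambda_0 + \bar{\bar{\mu}} + d\delta$. Since $w\delta = \delta$ and $(\rho|\delta) = h^\vee$, one has $c = (\mu, \delta) = 0$. A direct computation then gives
\begin{align*}
L e^{(x, \mu)} = \Bigl[h^\vee d + \tfrac{1}{2}\|\bar{\bar{\mu}}\|^2 + (\bar{\bar{\rho}}, \bar{\bar{\mu}})\Bigr] e^{(x, \mu)}.
\end{align*}
Using the orthogonal decomposition $\mathfrak{h}^* = \mathring{\mathfrak{h}}^*_\R \oplus (\C\Lambda_0 + \C\delta)$ together with $(\Lambda_0|\Lambda_0) = (\delta|\delta) = 0$, one checks $\|\rho\|^2 = \|\bar{\bar{\rho}}\|^2$, $\|\bar{\bar{\mu}}\|^2 = \|\mu\|^2 = 2\|\bar{\bar{\rho}}\|^2 - 2(w\rho, \rho)$, and $(\rho, \mu) = h^\vee d + (\bar{\bar{\rho}}, \bar{\bar{\mu}})$; substitution makes the bracket vanish, so $Lh = 0$. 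For $x$ on the wall $\{x(\alpha_i^\vee) = 0\}$, pairing $w$ with $s_{\alpha_i} w$ gives $(x, s_{\alpha_i}w\rho - \rho) = (s_{\alpha_i}x, w\rho) - (x, \rho) = (x, w\rho - \rho)$ while $\det(s_{\alpha_i}w) = -\det(w)$, so the summands cancel pairwise and $h(x) = 0$.

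\textbf{Boundedness and conclusion.} The Weyl denominator identity (\ref{ParticularWeyl}) yields
\begin{align*}
h(x) = \prod_{\alpha \in \Delta_+}\bigl(1 - e^{-(x,\alpha)}\bigr)^{\mathrm{mult}(\alpha)},
\end{align*}
converging absolutely on $\{(\delta,x) > 0\}$ by the convergence of affine characters. For $x \in \mathcal{C}$ one has $(x,\alpha) \ge 0$ for every $\alpha \in \Delta_+$ (since $\alpha$ is a nonnegative combination of simple roots and $(x,\alpha_i) = \tfrac{1}{2}(\alpha_i|\alpha_i) x(\alpha_i^\vee) \ge 0$), so every factor lies in $[0,1]$ and $h \in [0,1]$ on $\mathcal{C}$. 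Itô's formula and $Lh = 0$ then say $(h(X_{t\wedge T}))_t$ is a bounded $\P_x^\rho$-martingale with $h(X_T) = 0$ on $\{T < \infty\}$, whence
\begin{align*}
h(x) = \E_x^\rho\bigl[h(X_t)\mathbf{1}_{T > t}\bigr].
\end{align*}
Writing $X_t = x + t\rho + B_t$ with $B_t$ a standard BM on $\mathring{\mathfrak{h}}^*_\R$, the SLLN gives $(X_t,\alpha)/t \to (\rho,\alpha) > 0$ for every $\alpha \in \Delta_+$ ($\rho$ being strictly dominant), so on $\{T = +\infty\}$ one has $h(X_t) \to 1$. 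Bounded convergence then gives $\P_x^\rho(T = +\infty) = h(x)$.

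\textbf{Main obstacle.} The delicate point is justifying $h(X_t) \to 1$ on $\{T = +\infty\}$: since $\Delta_+$ is infinite for an affine Lie algebra, one must pass to the limit inside an infinite product. This requires a uniform tail estimate on $\sum_{\alpha \in \Delta_+}\mathrm{mult}(\alpha) e^{-(X_t,\alpha)}$, obtained by splitting imaginary roots $k\delta$ from real roots $\pm\beta + k\delta$ ($\beta \in \mathring{\Delta}_+$), summing geometric series in the $\delta$-grading, and using $(\bar{\bar{\rho}}, \beta) < h^\vee$ together with the summability of the affine character at $\rho$.
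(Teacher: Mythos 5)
Your proof takes essentially the same route as the paper's: exhibit $\sum_{w}\det(w)e^{(x,w\rho-\rho)}$ as a bounded harmonic function for the killed space-time process (via the same computation $\tfrac12\|w\rho-\rho\|^2+(\rho\,\vert\,w\rho-\rho)=0$), vanishing on $\partial\mathcal C$, bounded by $1$ through the denominator identity (\ref{ParticularWeyl}), and converging to $1_{T=\infty}$ by the law of large numbers $X_t/t\to\rho$. Your treatment is in fact slightly more careful than the paper's on the two points it leaves implicit — the pairwise cancellation on the walls and the interchange of the $t\to\infty$ limit with the infinite sum over $W$ (which the paper dispatches with ``as $g$ is analytic''), the latter being exactly the obstacle you correctly flag.
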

\begin{proof} If we consider the function $h$ defined on $(\R\Lambda_0+\mathring{\mathfrak{h}}_\R^*)$  by $$h(\lambda)=\P^\rho_{\lambda}(T=\infty), \quad \lambda\in(\R\Lambda_0+\mathring{\mathfrak{h}}_\R^*)\cap\mathcal C,$$
   usual martingal arguments state that $h$ is the unique bounded harmonic function for the killed process $(X_{t\wedge T})_{t\ge 0}$ under $\P_x^\rho$ such that 
\begin{align}\label{Boundary}
 h(\lambda)=0,\,  \textrm{ for }\lambda\in \partial \mathcal C,  \end{align} 
and
\begin{align} \label{Limite}
\lim_{t\to \infty}h(X_{t\wedge T})=1_{T=\infty}.  \end{align} 
  Let us proves that the function defined by the sum satisfies these properties. First notice that the   boundary condition (\ref{Boundary})   is satisfied. Moreover, as $x$ is in the interior of $\mathcal C$, formula (\ref{ParticularWeyl}) implies that
$$ \sum_{w\in W}\det(w)e^{(x,w(\rho)-\rho)}$$ is positive and  bounded by $1$. Choose an orthonormal basis $v_1,\dots,v_l$ of $\mathring{\mathfrak{h}}^*_\R$ and consider    for $w\in W$ a function $g_w$ defined on $\R_+^*\times \R^{l}$ by $$g_w(t,x_1,\dots,x_l)=e^{( t\Lambda_0+x,w(\rho)-\rho)},$$
where $x=x_1v_1+\dots+x_lv_l$. Letting $\Delta=\sum_{i=1}^l\partial_{x_ix_i}$, the function $g_w$ satisfies 
\begin{align}\label{edp}(\frac{1}{2}\Delta+h^\vee\partial_t+\sum_{i=1}^l(\rho,v_i)\partial_{x_i}) g_w=\frac{1}{2 }\vert\vert w(\rho)-\rho\vert\vert^2+(\rho\vert w(\rho)-\rho)=0.\end{align}
As the function $g=\sum_w\det(w)g_w$ is analytic on $\R_+^*\times \R^l$, it satisfies  (\ref{edp}) too.  Ito's Lemma implies that $(g((\tau_{t\wedge T},B_{t\wedge T}))_{t\ge 0}$ is a local martingale.  As the function $g$ is bounded by $1$ on $\{(t,x)\in \R_+^*\times \R^l: t\Lambda_0+x_1v_1+\dots+x_lv_l\in \mathcal C\}$,  $(g((\tau_{t\wedge T},B_{t\wedge T}))_{t\ge 0}$  is a martingale, i.e. $g$ is harmonic for the killed process under $\P_x^\rho$. It remains to prove that the condition (\ref{Limite}) is satisfied. For this, we notice that for any $w\in W$ distinct from the identity, $\rho-w(\rho)= \sum_{i=0}^lk_i\alpha_{i},$ where the $k_i$ are non negative integers not simultaneously equal to zero.  As almost surely
$$\lim_{t\to \infty} \frac{X_t}{t}=\rho,$$  one obtains
$$\lim_{t\to \infty} g_w(X_t)=0$$ for every $w\in W$ distinct from the identity. As the function $g$ is analytic on $\R_+^*\times \R^l$, the expected convergence   follows.
\end{proof} 

The following lemma is needed to prove a reflection principle for a Brownian motion killed on the boundary of the affine Weyl chamber.

\begin{lem}\label{Wonpt} For $x,y\in  {\mathfrak{h}}_\R^*$, $t\in \R_+$, $w\in W$, one has
 $$p^0_t(\overline{wx},\overline{wy})=e^{(w(y-x)-(y-x),h^\vee \Lambda_0)}p^0_t(\bar x,\bar y).$$
\end{lem}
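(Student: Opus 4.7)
The plan is to compute both sides using explicit coordinates in the decomposition $\mathfrak{h}^*_\R = \mathring{\mathfrak{h}}^*_\R \oplus \R\Lambda_0 \oplus \R\delta$, with the relations $(\Lambda_0|\Lambda_0) = (\delta|\delta) = 0$, $(\Lambda_0|\delta) = 1$, and $\mathring{\mathfrak{h}}^*_\R$ orthogonal to both $\Lambda_0$ and $\delta$. The key structural input is that $W$ fixes $\delta$ (since $\delta(\alpha_i^\vee) = \sum_j a_j a_{ij} = 0$ for all $i$), which makes the level functional $(\cdot|\delta)$ a $W$-invariant. Write $x = a_x\Lambda_0 + \bar{\bar x}$ and $y = a_y\Lambda_0 + \bar{\bar y}$ with $a_y - a_x = th^\vee$ (forced by the indicator in $p_t^0$). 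Then $wx = a_x\Lambda_0 + \bar{\bar{wx}} + b_x\delta$ and $wy = a_y\Lambda_0 + \bar{\bar{wy}} + b_y\delta$ for some scalars $b_x, b_y \in \R$, so that $\bar{wx} = a_x\Lambda_0 + \bar{\bar{wx}}$ and $\bar{wy} = a_y\Lambda_0 + \bar{\bar{wy}}$ automatically satisfy the level condition defining $p_t^0(\bar{wx},\bar{wy})$.

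The core step is to use the $W$-invariance of $(\cdot|\cdot)$ to compute
\begin{align*}
\|\bar{\bar y} - \bar{\bar x}\|^2 &= (y-x|y-x) = (w(y-x)|w(y-x))\\
&= 2(a_y-a_x)(b_y-b_x) + \|\bar{\bar{wy}} - \bar{\bar{wx}}\|^2,
\end{align*}
so that $\|\bar{\bar{wy}} - \bar{\bar{wx}}\|^2 = \|\bar{\bar y} - \bar{\bar x}\|^2 - 2(a_y-a_x)(b_y-b_x)$. Substituting into the Gaussian factor and using $a_y-a_x = th^\vee$ yields
$$p_t^0(\bar{wx},\bar{wy}) = p_t^0(\bar x,\bar y)\,\exp\!\Bigl(\tfrac{1}{t}(a_y-a_x)(b_y-b_x)\Bigr) = p_t^0(\bar x,\bar y)\,e^{h^\vee(b_y-b_x)}.$$
To recognize the exponent as $(w(y-x)-(y-x)\,|\,h^\vee\Lambda_0)$, observe that $(y-x\,|\,\Lambda_0) = 0$ because $y-x$ has no $\delta$-component (both $x$ and $y$ lie in $\R\Lambda_0 + \mathring{\mathfrak{h}}^*_\R$), whereas pairing $w(y-x) = (a_y-a_x)\Lambda_0 + (\bar{\bar{wy}}-\bar{\bar{wx}}) + (b_y-b_x)\delta$ with $\Lambda_0$ isolates $b_y-b_x$ via $(\delta|\Lambda_0) = 1$. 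Hence $(w(y-x)-(y-x)\,|\,h^\vee\Lambda_0) = h^\vee(b_y-b_x)$, which closes the identity.

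The genuine subtlety is not the Gaussian calculation itself but the bookkeeping in the non-Euclidean setting: elements of $W$ (specifically the translation part $T$) move points of $\R\Lambda_0 + \mathring{\mathfrak{h}}^*_\R$ off this subspace by introducing a $\delta$-component which the projection $\bar{\cdot}$ discards. That discarded $\delta$-component is simultaneously what breaks $W$-invariance of the naive Gaussian kernel and, when paired against $\Lambda_0$ through the isotropic-dual relation $(\delta|\Lambda_0)=1$, exactly produces the exponential correction factor appearing in the statement.
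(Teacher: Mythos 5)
Your proof is correct, and it takes a genuinely different (and arguably cleaner) route than the paper's. The paper argues by cases through the semidirect decomposition $W=T\ltimes\mathring{W}$: for $w\in\mathring{W}$ both sides are manifestly equal (the exponential factor is $1$ and $\overline{wx}=w\bar x$), and for $w=t_\alpha$ it expands the explicit translation formula $t_\alpha(\lambda)=\lambda+\lambda(K)\alpha-((\lambda\vert\alpha)+\tfrac12(\alpha\vert\alpha)\lambda(K))\delta$ inside the Gaussian; strictly speaking one must still observe that the correction factor is multiplicative in $w$ to pass from these two generating cases to a general $w=t_\alpha\sigma$. Your argument bypasses the case split entirely: the only structural inputs are that $W$ preserves $(\cdot\vert\cdot)$ and fixes $\delta$, so the $\Lambda_0$-coefficient of $wx$ equals that of $x$, and the whole discrepancy between $\Vert\bar{\bar{wy}}-\bar{\bar{wx}}\Vert^2$ and $\Vert\bar{\bar y}-\bar{\bar x}\Vert^2$ is carried by the cross term $2(a_y-a_x)(b_y-b_x)(\Lambda_0\vert\delta)$, which then reappears as $h^\vee(b_y-b_x)=(w(y-x)-(y-x)\vert h^\vee\Lambda_0)$. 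I checked the signs, and your uniform exponent specializes exactly to the paper's expression $-\tfrac{1}{2t}((h^\vee)^2t^2(\alpha\vert\alpha)+2h^\vee t(\alpha\vert y-x))$ in the case $w=t_\alpha$, so the two computations agree. What your approach buys is a single two-line identity valid for any isometry fixing $\delta$, with no need to invoke the explicit form of the translations or to compose cases.

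One small point to tidy up: the lemma is stated for $x,y\in\mathfrak{h}^*_\R$, which may carry $\delta$-components, whereas you assume $x,y\in\R\Lambda_0+\mathring{\mathfrak{h}}^*_\R$ when you assert $(y-x\vert\Lambda_0)=0$. You should add the one-line reduction with which the paper opens its proof: since $w\delta=\delta$, replacing $x$ and $y$ by $x+c\delta$ and $y+c'\delta$ changes neither $\overline{wx}$, $\overline{wy}$ nor $(w(y-x)-(y-x)\vert h^\vee\Lambda_0)$ (the added $\delta$-terms cancel in $w(y-x)-(y-x)$), so one may indeed assume $x=\bar x$ and $y=\bar y$. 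With that sentence added, the proof is complete.
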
 
\begin{proof} Notice that $\overline{wx}=\overline{w\bar x}$. For $w\in \mathring{W}$, $\overline{wx}=w\bar x$, $p_t^0(w(\bar x),w(\bar y))=p_t^0(x,y)$ and $(wx-x\vert \Lambda_0)=(wy-t\vert \Lambda_0)=0$, which implies the identity.   For $w=t_\alpha,$ $\alpha\in M$, one has
\begin{align*}
p_t^0(\overline{wx},\overline{wy})&=p_t^0(h^\vee u\alpha+ {\bar x},h^\vee(u+t)\alpha+ {\bar y})\\
&=\frac{1}{(2\pi t)^{\frac{l}{2}}}e^{-\frac{1}{2t}\vert \vert \bar y+th^\vee\alpha-\bar x\vert \vert^2}1_{(y\vert \delta)=th^\vee+(x\vert \delta)}\\
&=p^0_t(\bar x,\bar y)e^{-\frac{1}{2t}((h^\vee)^2t^2(\alpha\vert\alpha)+2h^\vee t(\alpha\vert y-x))}\\
&=e^{(w(y-x)-(y-x),h^\vee \Lambda_0)}p^0_t(\bar x,\bar y).
\end{align*}
\end{proof}
In the following, by a classical abuse of notation, $$\P^\rho_x(X_t=y,T\ge t),\textrm{ or } \, \P^0_x(X_t=y,T\ge t),$$ $x,y\in ( \R\Lambda_0+\mathring{\mathfrak{h}}_\R^*), t\ge 0$, stands for the semi-group of the  process $(X_t)_{t\ge 0}$, with drift or not, killed on the boundary of $\mathcal C$. We first prove a reflection principle for a Brownian motion with no drift.
\begin{lem} \label{reflectionsansdrift} For $x,y\in ( \R\Lambda_0+\mathring{\mathfrak{h}}_\R^*)$ in the interior of $\mathcal C$, such that $(y\vert \delta)=(x\vert \delta)+th^\vee$, we have
\begin{align*}
\P^0_{x}(X_t=y, T> t)
&=\sum_{w\in W}\det(w)e^{(wx-x,h^\vee \Lambda_0)}p_t^0(\overline{wx},y),\\
&=\sum_{w\in W}\det(w)e^{(y-w(y),h^\vee \Lambda_0)}p_t^0(x,\overline{w(y)}).
\end{align*} 
\end{lem}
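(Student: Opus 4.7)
The plan is to verify the identity by the method of images: I show that the right-hand side equals the Dirichlet heat kernel of the space-time Brownian semigroup on $\mathcal{C}$, and hence the transition density of $X_t$ killed at $\partial\mathcal{C}$. Define
\[
\phi_t(x,y) := \sum_{w\in W}\det(w)\,e^{(wx-x,\,h^\vee\Lambda_0)}\,p_t^0(\overline{wx},y).
\]
By uniqueness of the Dirichlet heat kernel it suffices to check: absolute convergence with legitimate termwise differentiation; that $\phi_t$ solves the forward heat equation in $y$ on each level slice; that $\phi_t(x,y)=0$ whenever $y\in\partial\mathcal{C}$; and that $\phi_t(x,\cdot)\to\delta_x$ as $t\to 0^+$ for $x$ in the interior of $\mathcal{C}$.

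Using $W=T\ltimes\mathring{W}$, write $w=t_\alpha\mathring{w}$ with $\alpha\in M$ and $\mathring{w}\in\mathring{W}$. The defining formula for $t_\alpha$ combined with the fact that $\mathring{W}$ fixes $\Lambda_0$ and $\delta$ gives, for $x=\tau\Lambda_0+\bar{\bar x}$,
\[
\overline{wx}=\tau\Lambda_0+\mathring{w}\bar{\bar x}+\tau\alpha,\qquad
(wx-x,\,h^\vee\Lambda_0)=-h^\vee\bigl((\mathring{w}\bar{\bar x}\vert\alpha)+\tfrac{\tau}{2}(\alpha\vert\alpha)\bigr).
\]
The Gaussian weight $e^{-h^\vee\tau(\alpha\vert\alpha)/2}$ coming from the prefactor, together with the Gaussian decay of $p_t^0(\overline{wx},y)$ in $\tau\alpha$, gives absolute and locally uniform convergence and legitimates termwise differentiation. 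The heat equation then holds termwise since the exponential prefactor is $y$-independent. The initial condition is also immediate: as $t\to 0^+$ only the $w=e$ term can contribute mass near $x$, because $x$ in the interior of $\mathcal{C}$ forces $\overline{wx}\neq x$ for $w\neq e$.

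The boundary vanishing is the main obstacle. On the wall $\{y(\alpha_i^\vee)=0\}$ I pair $w$ with $s_iw$; since $\det(s_iw)=-\det(w)$, cancellation amounts to
\[
e^{(u-s_iu,\,h^\vee\Lambda_0)}\,p_t^0(\overline{s_iu},y)=p_t^0(\overline{u},y),\qquad u=wx.
\]
For the finite roots $i\ge 1$ the simple root $\alpha_i$ lies in $\mathring{\mathfrak{h}}^*\perp\Lambda_0$, so the exponential factor is trivial and equality follows because $s_i$ is an isometry fixing $y$. For the affine wall $i=0$ the situation is more delicate: $(\alpha_0\vert\Lambda_0)=1$ yields $(u-s_0u,h^\vee\Lambda_0)=h^\vee u(\alpha_0^\vee)$, while the projection identity $\overline{s_0u}-\overline{u}=u(\alpha_0^\vee)\theta$ shifts the Gaussian argument by $u(\alpha_0^\vee)\theta$. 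Expanding $\Vert y-\overline{s_0u}\Vert^2$ and inserting the wall condition $(\bar{\bar y}\vert\theta)=\tau+th^\vee$ together with $(\bar{\bar u}\vert\theta)=\tau-u(\alpha_0^\vee)$ (which uses the level-preservation $(wx\vert\delta)=(x\vert\delta)$), the ratio of Gaussians works out to exactly $e^{h^\vee u(\alpha_0^\vee)}$, matching the exponential factor once the $u(\alpha_0^\vee)^2$ contributions cancel thanks to the normalization $(\theta\vert\theta)=2$. This algebraic conspiracy between the deterministic drift $h^\vee$ in the time coordinate and the prefactor $e^{(wx-x,h^\vee\Lambda_0)}$ is the heart of the lemma: a naive driftless method of images would fail precisely at the affine wall, and the prefactor is designed exactly to absorb the discrepancy. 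The symmetric form follows by running the same reflection argument from the $y$-side, which is equivalent to the substitution $w\mapsto w^{-1}$ with a matching rearrangement of the exponential weight.
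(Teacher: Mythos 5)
Your proof is correct in substance but takes a genuinely different route from the paper's. You identify the signed sum as the Dirichlet heat kernel of the space-time process in $\mathcal C$ by the method of images --- convergence, heat equation, boundary vanishing, initial condition, then uniqueness --- whereas the paper runs the probabilistic reflection principle directly: it applies the strong Markov property at the first exit time $T$, pairs $w$ with $s_\alpha w$ for $\alpha$ the wall hit at time $T$ so that the signed sum restricted to $\{T\le t\}$ vanishes, and notes that on $\{T>t\}$ only the identity term survives because $y$ is the only point of its $W$-orbit whose projection lies in the closed chamber. Both arguments hinge on exactly the same wall identity, namely Lemma \ref{Wonpt}: the paper uses it at the stopping time, while in your version it is precisely the cancellation needed on $\{y(\alpha_i^\vee)=0\}$, and you correctly isolate the real content --- that the prefactor $e^{(wx-x,h^\vee\Lambda_0)}$ is exactly what absorbs the Gaussian discrepancy produced by the translation part of $W$ at the affine wall, where a driftless image method would fail. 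What your route costs relative to the paper's is two analytic inputs: locally uniform convergence justifying termwise differentiation (which you handle via the quadratic decay $e^{-h^\vee\tau(\alpha\vert\alpha)/2}$ over the lattice $M$), and uniqueness of bounded solutions of the parabolic Dirichlet problem in the expanding cone, which you invoke but do not prove; it holds by the maximum principle on the space-time domain, but it deserves an explicit statement. What it buys is a self-contained characterization of the right-hand side as the killed transition density, independent of any stopping-time argument. One bookkeeping slip: your displayed cancellation condition should read $e^{(s_iu-u,h^\vee\Lambda_0)}\,p_t^0(\overline{s_iu},y)=p_t^0(\overline u,y)$, equivalently $p_t^0(\overline{s_iu},y)=e^{(u-s_iu,h^\vee\Lambda_0)}p_t^0(\overline u,y)$, which is Lemma \ref{Wonpt} specialized to $s_iy=y$; as written your exponent has the wrong sign, which is invisible on the finite walls but not on the affine one, although the Gaussian-ratio computation you describe in words is the correct one.
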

\begin{proof} Lemma \ref{Wonpt} implies in particular that we need to prove only one of the two identities. Let us prove the second one. Actually lemma \ref{Wonpt} implies that for $\alpha\in\Pi$ such that $s_\alpha(X_T)=0$ 
\begin{align*}
\E_{X_T}(1_{X_r=\overline{wy}})=e^{(wy-s_\alpha wy\vert h^\vee\Lambda_0)}\E_{X_T}(1_{X_r}=\overline{s_\alpha wy}),
\end{align*}  
which implies that 
\begin{align*}
\E_x(\sum_{w\in W}\det(w)e^{(y-wy,h^\vee\Lambda_0)}1_{T\le t,\, X_t=wy})=0.
\end{align*}
Then lemma follows from the fact that
$$\E_x(\sum_{w\in W}\det(w)e^{(y-w(y),h^\vee \Lambda_0)}1_{T> t,\, X_t=wy})=\E_x(1_{X_t=y,\,  T>t}).$$
 
\end{proof}
\begin{prop}\label{reflectioncontinue} For $x,y\in ( \R\Lambda_0+\mathring{\mathfrak{h}}_\R^*)$ in the interior of $\mathcal C$, such that $(y\vert \delta)=(x\vert \delta)+th^\vee$, we have
\begin{align*}
\P^\rho_{x}(X_t=y, T> t)&=\sum_{w\in W}\det(w)e^{(w(x)-x,\rho)}p_t^\rho(\overline{w(x)},y)\\
&=\sum_{w\in W}\det(w)e^{(y-w(y),\rho)}p_t^\rho(x,\overline{w(y)}),
\end{align*}
\end{prop}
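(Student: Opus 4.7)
The plan is to identify the right-hand side $\tilde q_t(x,y) := \sum_{w\in W}\det(w) e^{(w(x)-x|\rho)} p_t^\rho(\overline{w(x)},y)$ with the density of the killed process $\P^\rho_x(X_t\in dy, T>t)/dy$ via its characterization as the Dirichlet heat kernel of the space-time Brownian motion on $\mathcal{C}$. Specifically, I would verify that $\tilde q_t$ (i) solves Kolmogorov's forward equation in $(t,y)$, (ii) vanishes for $y$ on each wall $\{y(\alpha_j^\vee)=0\}$, $j=0,\ldots,l$, and (iii) converges to $\delta_x$ as $t\to 0^+$ for $x$ in the interior. By uniqueness of the Dirichlet heat semigroup, these three properties force $\tilde q_t(x,y)\,dy=\P^\rho_x(X_t\in dy, T>t)$.

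Properties (i) and (iii) are routine: each summand $p_t^\rho(\overline{w(x)},y)$ is a fundamental solution of the forward heat equation with drift $\bar{\bar\rho}$ and the coefficients $\det(w)e^{(w(x)-x|\rho)}$ are $(t,y)$-independent; and for $x$ in the interior of $\mathcal{C}$ with $(x|\delta)>0$, the stabilizer in $W$ is trivial and only the identity term $p_t^\rho(\bar x,y)$ places mass in the interior as $t\to0^+$. The real obstacle is (ii), which I would establish via the substitution $w\mapsto s_{\alpha_j}w$ combined with the pointwise reflection identity
\[
e^{(s_{\alpha_j}(z)-z|\rho)}\, p_t^\rho(\overline{s_{\alpha_j}(z)},y) = p_t^\rho(\bar z, y), \qquad y\text{ on the wall }\alpha_j^\vee.
\]
Granting the identity, the sign change $\det(s_{\alpha_j}w)=-\det(w)$ yields $\tilde q_t=-\tilde q_t$, hence zero on the wall. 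The identity itself I would verify by direct expansion of the log-ratio of Gaussians: using $s_{\alpha_j}(\rho)=\rho-\alpha_j$ one has $(s_{\alpha_j}(z)-z|\rho) = -(z|\alpha_j)$, and this must balance a quadratic expression in $z(\alpha_j^\vee)$ which collapses thanks to the wall condition $(y|\alpha_j)=0$ and the length formula $(\alpha_j|\alpha_j)=2a_j^\vee/a_j$. For $j\geq 1$ this is the classical finite-type reflection calculation, since $\overline{s_{\alpha_j}(z)}=s_{\alpha_j}(\bar z)$ stays in $\mathring{\mathfrak{h}}^*$; for $j=0$ the affine simple root carries a $\delta$-component, so $\overline{s_{\alpha_0}(z)}=\bar z + z(\alpha_0^\vee)\theta$ is a \emph{translation} of $\bar z$ rather than a reflection, and the wall condition $y(\alpha_0^\vee)=0$ encodes the space-time constraint $(\bar{\bar y}|\theta)=(x|\delta)+th^\vee$ that exactly compensates the missing geometric reflection. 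This is the affine novelty and the technical heart of the argument.

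The second equality would be obtained by a symmetric reflection argument applied to the $y$-variable: the function $\tilde q'_t(x,y) := \sum_w \det(w) e^{(y-w(y)|\rho)} p_t^\rho(x,\overline{w(y)})$ satisfies Kolmogorov's backward equation in $(t,x)$, tends to $\delta_y$ as $t\to 0^+$, and vanishes on each wall $x(\alpha_j^\vee)=0$ via the analogous identity $e^{(u-s_{\alpha_j}(u)|\rho)}p_t^\rho(x,\overline{s_{\alpha_j}(u)})=p_t^\rho(x,\bar u)$ applied after the change of variables $w\mapsto s_{\alpha_j}w$. By the same Dirichlet-uniqueness argument, $\tilde q'_t$ also equals the killed density and therefore coincides with $\tilde q_t$, giving the second expression.
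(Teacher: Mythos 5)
Your proposal is mathematically sound in outline, but it takes a genuinely different route from the paper. The paper's proof is two-layered and probabilistic: it first establishes the identity for the \emph{driftless} kernel $p_t^0$ (Lemma \ref{reflectionsansdrift}) by a strong-Markov reflection argument at the exit time $T$ — pairing $w$ with $s_\alpha w$ for the wall $\alpha$ hit at time $T$ and using the pointwise equivariance of the kernel (Lemma \ref{Wonpt}) to make the exited paths cancel — and then transfers the drift $\bar{\bar\rho}$ by Girsanov. You instead work directly with the drifted kernel $p_t^\rho$ and characterize the killed density analytically as the unique solution of a Dirichlet heat problem on the space-time cone, proving boundary vanishing by the substitution $w\mapsto s_{\alpha_j}w$. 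Both arguments ultimately rest on the same computational kernel identity (your wall identity is exactly Lemma \ref{Wonpt}, conjugated by $s_{\alpha_j}(y)=y$ and twisted by the drift), and you correctly isolate the genuinely affine point: for $j=0$ the reflection acts on the projected variable as a translation by a multiple of $\theta$, compensated by the level constraint $(y\vert\delta)=(x\vert\delta)+th^\vee$. What the paper's route buys is that it sidesteps two technicalities your route must confront explicitly: (a) a precise uniqueness statement for the Dirichlet heat kernel on an \emph{unbounded} domain with moving boundary, specifying the growth class in which uniqueness holds; and (b) the fact that $W$ is infinite, so the term-by-term differentiation in (i), the identification of the $t\to 0^+$ limit in (iii), and the rearrangement $w\mapsto s_{\alpha_j}w$ in (ii) all require absolute convergence estimates (supplied by Gaussian decay together with the convergence of $\sum_{w}e^{(x\vert w(\rho)-\rho)}$ guaranteed by (\ref{ParticularWeyl})). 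What your route buys is that it avoids invoking Girsanov and the strong Markov property at $T$, and produces the second expression by a symmetric backward-equation argument rather than by the substitution $w\mapsto w^{-1}$ in Lemma \ref{Wonpt}. If you fill in the uniqueness statement and the convergence control over the affine Weyl group, your proof is complete.
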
 
\begin{proof}   The result follows in a standard way from lemme \ref{reflectionsansdrift} from a Girsanov's theorem. 
\end{proof}

\section{Scaling limit of the Markov chain on $P_+$.}\label{Scaling limit of the Markov chain}
For $x\in(\R\Lambda_0+\mathring{\mathfrak{h}}^*_\R)$, proposition \ref{Danslachambre} and identity ($\ref{ParticularWeyl}$) imply in particular that the probability $P^\rho_{x}(T=+\infty)$ is positive when   $x$   is in the interior of $\mathcal C$. 
Let $(\mathcal F_t)_{t\ge 0}$ be the natural filtration of $(X_t)_{t\ge 0}$.  Let us fix $ x\in  (\R\Lambda_0+\mathring{\mathfrak{h}}^*_\R)$ in the interior of $\mathcal C$.  One considers the following conditioned process.
\begin{defn} One defines a probability $\Q^\rho_{x}$ letting 
$$\Q^\rho_{x}(A)=\E_{x}(\frac{\P^\rho_{X_t}(T=+\infty)}{\P^\rho_x(T=+\infty)}1_{T\ge t,\, A}), \textrm{ for } A\in \mathcal F_t,\, t\ge 0.$$
\end{defn}
 Under the probability $\Q^\rho_{x}$, the process $(X_t)_{t\ge 0}$ is a space-time Brownian motion with drift $ \rho$, conditioned to remain forever in the affine Weyl chamber. Let $(x_n)_{n\ge 0}$ be a sequence of elements of $P_+$ such that the sequence $(\frac{x_n}{n})_{n\ge 0}$ converges towards $x$ when $n$ goes to infinity.  For any $n\in \N^*$, we consider a Markov process $(\Lambda^{n}(k),k\ge 0)$ starting from $x_n$, with a transition probability $Q_\omega$ defined by (\ref{Markovdominant}), with $\omega\in P_+^{h^\vee}$ and $h=\frac{1}{n}\nu^{-1}( \rho)$. Notice that for $n,k\in \N$,  $\Lambda^{n}(k)$ is a dominant weight of level $kh^\vee+(x_n\vert\delta)$.    Then the following convergence holds. 

\begin{theo}
The sequence of processes $(\frac{1}{n}\bar\Lambda^{n}([nt]), t\ge 0)$ converges when $n$ goes to infinity towards the process    $(X_t,t\ge 0)$ under $\mathbb{Q}^\rho_{x}$. 
\end{theo}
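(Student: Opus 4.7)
The plan is to match the scaled one-step kernel of the Markov chain with the transition density of $(X_t)$ under $\Q^\rho_x$ by combining the discrete reflection principle of Proposition \ref{reflectiondiscrete} with the continuous one of Proposition \ref{reflectioncontinue}. By definition, the transition semigroup of $\Q^\rho_x$ is the Doob $h$-transform
$$q_t^\rho(x,y)=\frac{h(y)}{h(x)}\, p_t^{\rho,T}(x,y),\qquad h(z):=\P^\rho_z(T=+\infty),$$
where $p_t^{\rho,T}(x,\cdot)$ is the subprobability density of $(X_t,\, T>t)$ under $\P^\rho_x$, so the task is to recognise this factorisation in the limit.

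First I would fix $\beta_n\in P_+$ with $\beta_n/n\to y$ in the interior of $\mathcal C$ and apply Proposition \ref{reflectiondiscrete} at time $[nt]$, writing $\bar Q_\omega^{[nt]}(x_n,\beta_n)$ as a character ratio times a reflected sum involving the random walk kernel $\bar P_\omega^{[nt]}$. Using (\ref{Weyl}) and (\ref{ParticularWeyl}), the common Weyl denominator cancels and the ratio reduces to
$$\frac{\mbox{ch}_{\beta_n}(h)e^{-\langle\beta_n,h\rangle}}{\mbox{ch}_{x_n}(h)e^{-\langle x_n,h\rangle}}=\frac{\sum_{w\in W}\det(w)e^{\langle w(\beta_n+\rho)-(\beta_n+\rho),h\rangle}}{\sum_{w\in W}\det(w)e^{\langle w(x_n+\rho)-(x_n+\rho),h\rangle}}.$$
With $h=\frac{1}{n}\nu^{-1}(\rho)$ each exponent equals $\frac{1}{n}(w(\lambda_n+\rho)-(\lambda_n+\rho)\vert\rho)$ and converges to $(w(z)-z\vert\rho)$ whenever $\lambda_n/n\to z$, so Proposition \ref{Danslachambre} identifies the limit ratio as $h(y)/h(x)$.

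Next I would upgrade Proposition \ref{ConBrown} to a local limit theorem: for $\lambda_0/n\to x'$ and $\beta_0/n\to y$ with matching levels, and for an appropriate scaling constant $c_n$ (essentially the covolume of the weight lattice divided by $n^l$),
$$c_n^{-1}\bar P_\omega^{[nt]}(\lambda_0,\beta_0)\longrightarrow p_t^\rho(x',y)$$
uniformly on compacts. This refinement can be extracted by Fourier inversion from the same transformation formulas of propositions 13.8 and 13.9 of \cite{Kac} that already underlie Proposition \ref{ConBrown}; the required uniform tail bounds on the characteristic function are encoded in the spectral gap $m_\Lambda>m_{h^\vee\Lambda_0}$ for $\Lambda\in P_+^{h^\vee}\setminus\{h^\vee\Lambda_0\}$ exploited there. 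Substituting this termwise into the reflected sum of Proposition \ref{reflectiondiscrete}, and using $\rho/n\to 0$ so that $\overline{w(x_n+\rho)-\rho}/n\to\overline{w(x)}$, one obtains
$$c_n^{-1}\sum_{w\in W}\det(w)e^{\langle w(x_n+\rho)-(x_n+\rho),h\rangle}\bar P_\omega^{[nt]}(\overline{w(x_n+\rho)-\rho},\beta_n)\longrightarrow\sum_{w\in W}\det(w)e^{(w(x)-x\vert\rho)}p_t^\rho(\overline{w(x)},y),$$
and by Proposition \ref{reflectioncontinue} the right-hand side equals $p_t^{\rho,T}(x,y)$. Multiplying by the character ratio limit reproduces $q_t^\rho(x,y)$; finite-dimensional convergence then follows from iterated use of the Markov property, and tightness is inherited from Proposition \ref{ConBrown} via domination on the event $\{T>t\}$.

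The main obstacle is the infinite sum over the affine Weyl group $W=T\ltimes\mathring W$: the translation subgroup $T$ contributes infinitely many $t_\alpha$, $\alpha\in M$, so interchanging sum and limit in the reflected sum demands uniform-in-$n$ tail estimates on $\bar P_\omega^{[nt]}(\overline{w(x_n+\rho)-\rho},\beta_n)$ as $w$ ranges over $T$. This amounts to Gaussian-type decay for the prelimit walk at translated starting points, which again reduces to the modular/character estimates driving the proof of Proposition \ref{ConBrown}. Once this uniform control is established, the interchange of sum and limit is routine and the theorem follows.
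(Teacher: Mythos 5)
Your proposal is correct and follows essentially the same route as the paper, whose own proof consists of a single sentence citing exactly the four ingredients you combine (Propositions \ref{reflectiondiscrete}, \ref{ConBrown}, \ref{Danslachambre} and \ref{reflectioncontinue}). In fact you supply strictly more detail than the paper does: the local limit theorem refinement of Proposition \ref{ConBrown} and the uniform-in-$n$ tail control needed to interchange the limit with the infinite sum over the translation part of $W$ are genuine technical points that the paper's ``immediately'' leaves implicit, and you identify them accurately.
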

\begin{proof} 
 Propositions  \ref{reflectiondiscrete} and  \ref{ConBrown} imply that the sequence of processes $(\frac{1}{n}\bar\Lambda^{n}([nt]), t\ge 0)$ converges when $n$ goes to infinity towards a Markov process with transition probability semi-group $(q_t)_{t\ge 0}$  defined by 
$$q_t(x,y)=\frac{\psi(y)}{\psi(x)}\sum_{w\in W}\det(w)e^{(w(x)-x\vert \rho)}p_t^\rho(\overline{w(x)},y), \quad x,y\in (\R\Lambda_0+\mathring{\mathfrak{h}}^*_\R),$$
where $\psi(x)=\sum_{w\in W}\det(w)e^{ (x\vert w(\rho)-\rho)}$. Propositions  \ref{Danslachambre} and \ref{reflectioncontinue} imply that 
$$q_t(x,y)=\frac{\P^\rho_y(T=+\infty)}{\P^\rho_x(T=+\infty)}\P_x(X_t=y, \,T> t), \quad x,y\in(\R\Lambda_0+\mathring{\mathfrak{h}}^*_\R),$$
which achieves the proof.
\end{proof}

\end{document}